\tikzset{>=stealth',
  head/.style = {fill = white, text=black},
  plaque/.style = {draw, rectangle, minimum size = 10mm, fill=white}, 
     pil/.style={->,thick},
  junct/.style = {draw,circle,inner sep=0.5pt,outer sep=0pt, fill=black}
  }
  \newcommand{\convexpath}[2]{
  [   
  create hullcoords/.code={
    \global\edef\namelist{#1}
    \foreach [count=\counter] \nodename in \namelist {
      \global\edef\numberofnodes{\counter}
      \coordinate (hullcoord\counter) at (\nodename);
    }
    \coordinate (hullcoord0) at (hullcoord\numberofnodes);
    \pgfmathtruncatemacro\lastnumber{\numberofnodes+1}
    \coordinate (hullcoord\lastnumber) at (hullcoord1);
  },
  create hullcoords
  ]
  ($(hullcoord1)!#2!-90:(hullcoord0)$)
  \foreach [
  evaluate=\currentnode as \previousnode using \currentnode-1,
  evaluate=\currentnode as \nextnode using \currentnode+1
  ] \currentnode in {1,...,\numberofnodes} {
    let \p1 = ($(hullcoord\currentnode) - (hullcoord\previousnode)$),
    \n1 = {atan2(\y1,\x1) + 90},
    \p2 = ($(hullcoord\nextnode) - (hullcoord\currentnode)$),
    \n2 = {atan2(\y2,\x2) + 90},
    \n{delta} = {Mod(\n2-\n1,360) - 360}
    in 
    {arc [start angle=\n1, delta angle=\n{delta}, radius=#2]}
    -- ($(hullcoord\nextnode)!#2!-90:(hullcoord\currentnode)$) 
  }
}
\theoremstyle{plain}
\newtheorem{theorem}{Theorem}[section]
\newtheorem{lemma}[theorem]{Lemma}
\newtheorem{proposition}[theorem]{Proposition}
\newtheorem{corollary}[theorem]{Corollary}
\theoremstyle{definition}
\newtheorem{remark}[theorem]{Remark}
\numberwithin{equation}{section}
\definecolor{darkblue}{rgb}{0.0,0,0.7}
\newcommand{\newword}[1]{\textcolor{darkblue}{\textbf{\emph{#1}}}}
\newcommand{\bfa}{\mathbf{a}}
\newcommand{\bfb}{\mathbf{b}}
\newcommand{\bfc}{\mathbf{c}}
\newcommand{\Gr}{\ensuremath{\mathrm{Gr}}}
\newcommand{\inc}{\ensuremath{\mathrm{Inc}}}
\newcommand{\incgl}{\ensuremath{\mathrm{Inc}_\mathrm{pack}}}
\newcommand{\PD}{\ensuremath{\mathcal{PD}}}
\newcommand{\BT}{\ensuremath{\mathcal{BT}}}
\newcommand{\TT}{\ensuremath{\mathcal{TT}}}
\newcommand{\DT}{\ensuremath{\mathcal{DT}}}
\newcommand{\rank}{\ensuremath{\mathrm{rank}}}
\begin{document}

\title[Dynamics of minuscule plane partitions]{Minuscule analogues of the plane partition periodicity conjecture of Cameron and Fon-Der-Flaass}

\author{Oliver Pechenik}
\address[OP]{Department of Combinatorics \& Optimization, University of Waterloo, Waterloo, ON N2L 3G1, Canada}
\email{oliver.pechenik@uwaterloo.ca}

\date{\today}

\begin{abstract}
Let $P$ be a graded poset of rank $r$ and let $\bfc$ be a $c$-element chain. For an order ideal $I$ of $P \times \bfc$, its rowmotion $\psi(I)$ is the smallest ideal containing the minimal elements of the complementary filter of $I$. The map $\psi$ defines invertible dynamics on the set of ideals.
We say that $P$ has \emph{NRP (`not relatively prime') rowmotion}  if no $\psi$-orbit has cardinality relatively prime to $r+c+1$. 

In work with R.~Patrias (2020), we proved a 1995 conjecture of P.~Cameron and D.~Fon-Der-Flaass by establishing NRP rowmotion for the product $P = \bfa \times \bfb$ of two chains, the poset whose order ideals correspond to the Schubert varieties of a Grassmann variety $\Gr_a(\mathbb{C}^{a+b})$ under containment. Here, we initiate the general study of posets with NRP rowmotion. 

Our first main result establishes NRP rowmotion for all minuscule posets $P$, posets whose order ideals reflect the Schubert stratification of minuscule flag varieties. Our second main result is that NRP promotion depends only on the isomorphism class of the comparability graph of $P$.
 \end{abstract}

\maketitle

\section{Introduction}\label{sec:intro}

The \emph{rowmotion} operator $\psi$ defines a discrete dynamical system on the (order) ideals of a finite poset. This operator first appeared in the 1970s \cite{Duchet, Brouwer.Schrijver}, but has gained more attention recently (e.g, \cite{Panyushev, Armstrong.Stump.Thomas, Striker.Williams,  Grinberg.Roby:II, Grinberg.Roby:I, Propp.Roby, Dilks.Pechenik.Striker, Thomas.Williams, Hopkins:symmetry}), with somewhat mysterious connections to cluster algebras \cite{Galashin.Pylyavskyy, Shen.Weng},  Schubert calculus \cite{Buch.Wang}, and quiver representations \cite{Garver.Patrias.Thomas}.

Let $Q$ be a finite poset and $I \subseteq Q$ be an ideal. Then its \newword{rowmotion} $\psi(I)$ is the ideal generated by the minimal elements of $Q \setminus I$. As $\psi$ is an invertible operator, it permutes the set $J(Q)$ of ideals of $Q$. For a general poset $Q$, very little is known about the orbit structure of this permutation. Nonetheless, many posets with a major role in algebraic combinatorics exhibit rich structure in their $\psi$-dynamics.

In this paper, our focus is on the family of \emph{minuscule posets} and their products with chains. In particular, all posets we consider will be \emph{finite} and \emph{graded}.
Minuscule posets encode the containments of Schubert varieties in \emph{minuscule flag varieties}, analogues of Grassmannians that exhibit many of their particularly attractive geometric properties. We recall the definition of minuscule posets and their classification in Section~\ref{sec:minuscule}. The minuscule posets corresponding to actual Grassmannians are the \newword{rectangles} $\bfa \times \bfb$, where $\bfa, \bfb$ denote chains of any positive cardinalities $a,b$. Empirically, general minuscule posets share many of the combinatorially nice properties of rectangles, and results first established for rectangles often extend to the general minuscule setting (see, e.g., \cite{Thomas.Yong:comin, Buch.Samuel, Rush.Shi} for examples of extensions of this form).

The \newword{length} of a chain poset is its number of cover relations, so one less than its number of elements.
The \newword{rank} of the poset $Q$ is the length $\rank(Q)$ of the longest chain in $Q$. For example, the rectangle $\bfa \times \bfb$ has rank $a+b-2$. For every graded poset $Q$, it is easy to see that the $\psi$-orbit of the empty order ideal has size $\rank(Q)+2$. In particular, if $P$ is a graded poset and $\bfc$ is a $c$-element chain, then the empty order ideal of $P \times \bfc$ has a $\psi$-orbit of cardinality exactly $\rank(P)+c+1$. Other order ideals may have $\psi$-orbits of different sizes, but in important cases the orbit cardinalities are all related to the quantity $\rank(P)+c+1$.

Say that a finite graded poset $P$ has \newword{NRP (`not relatively prime') rowmotion} if, for all positive $c$, no $\psi$-orbit of $J(P \times \bfc)$ has cardinality relatively prime to $\rank(P)+c+1$. In work with R.~Patrias, we proved that all rectangles have NRP rowmotion.

\begin{theorem}[{\cite{Patrias.Pechenik}}]\label{thm:rectangle}
	If $P = \bfa \times \bfb$ is a rectangle, then $P$ has NRP rowmotion. That is, for any $c > 0$, there is no $\psi$-orbit of $J(P \times \bfc)$ with cardinality relatively prime to $a+b+c-1$.
\end{theorem}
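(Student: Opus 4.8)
The plan is to prove NRP rowmotion for rectangles by reducing to a well-understood model of rowmotion on products of three chains. The key observation is that $(\bfa \times \bfb) \times \bfc$ is itself a product of three chains, so we are studying rowmotion on $J(\bfa \times \bfb \times \bfc)$. For such triple products, there is a classical equivariant bijection, due to work going back to Stanley and refined by Striker--Williams, between rowmotion orbits and orbits of the simpler \emph{promotion} operator (or, via the toggle-group viewpoint, between rowmotion and a rotation action). First I would set up this equivariance carefully: rowmotion on $J(\bfa \times \bfb \times \bfc)$ is conjugate, within the toggle group, to an operator whose dynamics are governed by a cyclic rotation of order $a+b+c-1 = \rank(P)+c+1$. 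This converts the arithmetic statement about orbit sizes into a statement about orbits of a cyclic group action of known order.

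The heart of the argument is then a cyclic-sieving or fixed-point analysis. The plan is to invoke (or re-derive) a cyclic sieving phenomenon for the set $J(\bfa \times \bfb \times \bfc)$ under the order-$(a+b+c-1)$ rotation, with polynomial given by a product formula (a $q$-analogue counting the plane partitions in an $a \times b \times c$ box, i.e.\ the MacMahon box formula). Once such a CSP is in hand, the cardinality of any orbit of length $\ell$ is controlled by which roots of unity the polynomial does \emph{not} vanish at. Concretely, to show no orbit has length relatively prime to $n := a+b+c-1$, one shows that the only way an orbit can avoid sharing a common factor with $n$ is to be a free orbit of size exactly $n$ itself — and a free orbit trivially shares the factor $n$. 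The technical step is to rule out orbits whose length $\ell$ divides $n$ nontrivially but is coprime to $n$, which is vacuous once we observe that any divisor of $n$ shares a factor with $n$ unless it equals $1$; hence the real content is excluding fixed points, i.e.\ orbits of size $1$, equivalently showing the evaluation of the CSP polynomial at a primitive $n$-th root of unity (detecting the relevant stabilizers) behaves as required.

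More precisely, I would argue as follows. An orbit of size $\ell$ under a cyclic action of order $n$ must have $\ell \mid n$, so $\gcd(\ell, n) = \ell$; thus $\ell$ is relatively prime to $n$ precisely when $\ell = 1$. Therefore NRP rowmotion for the rectangle is equivalent to the assertion that rowmotion on $J(\bfa \times \bfb \times \bfc)$ has no fixed points, i.e.\ $\psi(I) = I$ for no ideal $I$. So the plan reduces to a clean combinatorial claim: rowmotion on a product of three chains is fixed-point-free. I would prove this directly by analyzing the empty ideal and the whole poset, or more robustly by using the equivariant bijection to transport fixed points of $\psi$ to fixed points of the rotation, then checking that the rotation of order $n > 1$ on the relevant antichain/plane-partition model has no fixed points (a parity or weight-counting argument: a fixed plane partition would force a contradiction in its total box count modulo the rotation order).

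The main obstacle I anticipate is establishing the equivariance cleanly enough that fixed points transport correctly; the subtlety is that rowmotion and promotion agree only up to conjugacy, and the precise statement one needs is that \emph{orbit sizes}, not orbits themselves, are preserved — which is exactly what conjugacy gives. The secondary difficulty is handling the boundary behavior: small cases where one of $a, b, c$ equals $1$ degenerate, and one must confirm the claimed CSP polynomial or the direct fixed-point count still applies. I expect the degenerate cases to be checkable by hand, so the genuine work is the equivariant reduction together with a careful invocation of the triple-product cyclic sieving phenomenon, after which the coprimality statement follows formally from the divisibility observation $\ell \mid n \Rightarrow \gcd(\ell,n) = \ell$.
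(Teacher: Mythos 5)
Your proposal fails at its foundation: rowmotion on $J(\bfa \times \bfb \times \bfc)$ is \emph{not} conjugate to a cyclic action of order $n = a+b+c-1$, and its orbit sizes do not divide $n$. The Striker--Williams toggle-group conjugacy does show that rowmotion and promotion have the same orbit structure, but for a product of \emph{three} chains neither operator has order $n$; the rotation model you are invoking (ideals as binary words rotated cyclically) is special to products of \emph{two} chains. This is precisely why the Cameron--Fon-Der-Flaass conjecture was hard: orbit cardinalities on triple products behave erratically, and are frequently larger than $n$ and not divisors of it (for instance, orbits of size $2q$ and $3q$ occur even in the minuscule setting, as noted in the proof of Proposition~\ref{prop:maintab}). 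Consequently your key step ``$\ell \mid n$, hence coprime to $n$ iff $\ell = 1$'' has a false premise, and the cyclic sieving phenomenon you hope to invoke cannot exist in the form stated, since a CSP for a $C_n$-action presupposes that the action has order dividing $n$.

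There is also a clean reductio ad absurdum: your argument nowhere uses that two of the three factors form a rectangle, so if it worked it would prove NRP rowmotion for \emph{every} graded poset $P$ in place of $\bfa \times \bfb$ --- indeed, your reduced claim (rowmotion has no fixed points) holds trivially for every nonempty finite poset, since $\psi(I) = I$ would force a minimal element of the complement of $I$ to lie in $I$. But NRP rowmotion is false in general: the paper exhibits a packed tableau in $\inc^7({\bf 2} \times {\bf 2} \times {\bf 2})$ whose $K$-promotion orbit has cardinality $27$ (Figure~\ref{fig:cube}), which via Theorem~\ref{thm:equivbij} yields a rowmotion orbit of cardinality $27$, coprime to $7$; the bee hummingbird poset gives another counterexample. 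The actual proof of Theorem~\ref{thm:rectangle} in the cited work (mirrored for minuscule posets in Section~\ref{sec:proof}) is quite different: transport rowmotion to $K$-promotion on $\inc^q(\bfa\times\bfb)$ with $q = a+b+c-1$ (Theorem~\ref{thm:equivbij}); reduce to packed tableaux via deflation (Lemma~\ref{lem:packedRP}); prove the frame rigidity theorem that $\psi^q$ fixes the frame (doubletree) of any increasing tableau on a rectangle; and then use coprimality of the orbit size $k$ with $q$ group-theoretically --- $\psi^q$ generates the cyclic group acting on the orbit, so the frame is fixed by $\psi$ itself, which forces the tableau to be the minimal tableau, impossible for a packed tableau on an alphabet of size $q > a+b-1$. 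The arithmetic enters through this generator trick, not through any divisibility of orbit sizes into $n$.
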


The special case of Theorem~\ref{thm:rectangle} with $a+b+c-1$ a prime number was conjectured in 1995 by P.~Cameron and D.~Fon-Der-Flaass \cite{Cameron.Fonderflaass}. As a corollary, Theorem~\ref{thm:rectangle} yields the only known proof of their conjecture.

In light of Theorem~\ref{thm:rectangle}, it is natural to ask what other posets exhibit NRP rowmotion. The goal of this paper is begin addressing this classification question. Our first main result is an analogue of Theorem~\ref{thm:rectangle} for arbitrary minuscule posets:

\begin{theorem}\label{thm:main}
	Let $M$ be a minuscule poset. Then $M$ has NRP rowmotion.
\end{theorem}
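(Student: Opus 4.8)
The plan is to fix a minuscule poset $M$ and $c>0$, write $N=\rank(M)+c+1$, and show that no $\psi$-orbit of $J(M\times\bfc)$ has cardinality coprime to $N$. My first move is to reduce to a manageable list of cases using the classification recalled in Section~\ref{sec:minuscule}: every minuscule poset is a rectangle, a shifted staircase (arising in types $B$ and $D$), or one of the two exceptional posets (from $E_6$ and $E_7$). Rectangles are already settled by Theorem~\ref{thm:rectangle}, so the real content is the remaining families. Note that the comparability-graph invariance of NRP (our second main result) does not by itself reduce these to rectangles, since, e.g., shifted staircases are not comparability-graph isomorphic to any rectangle; a genuine argument is needed. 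Moreover, the exceptional families still carry the unbounded parameter $c$, so a finite check is impossible and I will need an argument uniform in $c$ within each family.

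The central idea is to replace the global operator $\psi$ by a local one and extract a cyclic symmetry of order $N$. Concretely, I would first pass from rowmotion to promotion via the toggle-group conjugacy of Striker--Williams \cite{Striker.Williams}: since $M$ admits a planar Hasse diagram, $M\times\bfc$ carries the layered structure needed for a slab-by-slab toggle analysis, and conjugacy guarantees that $\psi$ and promotion have the same multiset of orbit sizes. The goal is then to encode minuscule plane partitions (order ideals of $M\times\bfc$) by combinatorial words or necklaces --- a minuscule analogue of the Stanley--Thomas word --- under which promotion becomes honest cyclic rotation, so that orbit sizes appear as periods of necklaces, which divide $N$. The point requiring care is the discrepancy between promotion orbits and rotation orbits; I expect the cleanest formulation to be a cyclic sieving phenomenon for $J(M\times\bfc)$ with group $\Z/N$, in the spirit of the minuscule results of Rush--Shi \cite{Rush.Shi} and the resonance framework of Dilks--Pechenik--Striker \cite{Dilks.Pechenik.Striker}, with the minuscule jeu de taquin of Thomas--Yong \cite{Thomas.Yong:comin} supplying the equivariant bijection outside the rectangular case.

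With a $\Z/N$-model in hand, the finish is number-theoretic and proceeds prime by prime. For each prime $p\mid N$, I would compute the number of elements fixed by the relevant power of promotion as an evaluation, at a $p$-power root of unity, of the generating function counting minuscule plane partitions; the key leverage is that minuscule posets are Gaussian, so these counts have MacMahon-type product formulas (the box formula and its minuscule generalizations), making the root-of-unity evaluations explicit and uniform in $c$. Feeding the resulting fixed-point counts through the orbit-counting identities should show that $p$ divides every orbit whose size would otherwise be coprime to $N$. One must separately exclude fixed points of promotion, a real subtlety even granting divisibility, since a singleton orbit has size $1$, coprime to $N$; here I would verify that the product formula does not vanish at a primitive $N$-th root of unity.

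The step I expect to be the main obstacle is constructing the $\Z/N$-model for the exceptional posets. For rectangles and shifted staircases there is an ambient box or shifted shape that supplies a natural rotation, but the $E_6$ and $E_7$ posets have no such ambient structure, so building a promotion-equivariant cyclic encoding there, valid simultaneously for all $c$, is the crux. If a fully uniform word encoding proves elusive, a fallback is to extract the needed resonance directly from the periodicity of birational rowmotion \cite{Grinberg.Roby:I, Grinberg.Roby:II}, which is known or expected for minuscule posets, and to tropicalize it to the piecewise-linear action governing $J(M\times\bfc)$; this sidesteps an explicit encoding at the cost of more indirect control over the individual orbit sizes.
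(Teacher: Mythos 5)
There is a genuine gap, and it sits at the heart of your plan rather than in the details. Every route you propose --- a necklace encoding under which promotion becomes ``honest cyclic rotation'' so that orbit sizes ``divide $N$,'' a cyclic sieving phenomenon for $J(M\times\bfc)$ with $\Z/N$ acting via promotion, or tropicalized birational-rowmotion periodicity --- would prove that every $\psi$-orbit of $J(M\times\bfc)$ has cardinality \emph{dividing} $N=\rank(M)+c+1$. That statement is false, and its failure is precisely why the property is called ``not relatively prime'' rather than ``divides.'' Concretely, for the Freudenthal poset the computations of \cite{Mandel.Pechenik} (quoted in the paper's proof of Proposition~\ref{prop:maintab}) exhibit packed increasing tableaux whose $K$-promotion orbits have cardinalities $2q$ and $3q$; these are harmless for NRP (since $\gcd(2q,q)=q>1$) but show that promotion does not have order $N$ on $J(M\times\bfc)$, so no rotation-equivariant encoding and no such CSP can exist. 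The same objection disposes of the fallback: birational rowmotion is periodic on rectangles but not on products of three or more chains \cite{Grinberg.Roby:I, Grinberg.Roby:II}, and any periodicity for $M\times\bfc$ would, after tropicalization, again force all combinatorial orbit sizes to divide $N$. Root-of-unity fixed-point counting cannot rescue this, because every such counting argument presupposes the cyclic group action whose existence is exactly what fails; NRP is genuinely weaker than cyclicity, and an argument routed through exact cyclicity proves too much.

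Two further problems are worth naming, since correcting them points to what the paper actually does. First, your case decomposition omits the propellers $J^k({\bf 2}\times{\bf 2})$, the second infinite family arising from even orthogonal groups alongside the shifted staircases. Second, your assertion that a finite check is impossible for the exceptional posets because $c$ is unbounded is mistaken, and the reduction you are missing is the paper's key technical device: by the deflation machinery (Proposition~\ref{prop:deflation}, Proposition~\ref{prop:deflationperiod}, Lemma~\ref{lem:packedRP}), any orbit whose cardinality is coprime to the alphabet size consists of \emph{packed} tableaux, and a fixed finite poset carries only finitely many packed increasing tableaux across all alphabets. Hence NRP is a finite verification for each individual poset (Corollary~\ref{cor:check_packed}), which is how the propellers, the Cayley--Moufang poset, and the Freudenthal poset are dispatched. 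The infinite families are then handled not by cyclicity but by an invariance statement much weaker than periodicity: $\DT(T)=\DT\big(\psi^{q}(T)\big)$ for all $T\in\inc^q(M)$ (Theorem~\ref{thm:doubletree}; for shifted staircases this is proved by a doubling trick reducing to the rectangle theorem of \cite{Pechenik:frames}). If some orbit had cardinality coprime to $q$, then $\psi^q$ would generate the cyclic action on that orbit, forcing $\DT(T)=\DT(\psi(T))$, which by Lemma~\ref{lem:minimalstaircase} forces $T$ to be the minimal tableau and $q=\rank(M)+1$, contradicting $c>0$. Rectangles are quoted from Theorem~\ref{thm:rectangle}.
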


It was not {\it a priori} clear to the author whether NRP rowmotion should be common or not. However, it appears that posets with NRP rowmotion are very rare. In Section~\ref{sec:other}, we exhibit counterexamples to various {\it a priori} plausible extensions of Theorem~\ref{thm:main}. On the other hand, we do find a few non-minuscule posets with NRP rowmotion. In particular, we show in Theorem~\ref{thm:iso_graphs} that NRP rowmotion depends only of the graph-isomorphism class of the comparability graph of the poset $P$. Classifying posets with NRP rowmotion would be valuable, but looks to be difficult. 

Unfortunately, our proof of Theorem~\ref{thm:main} is not type-uniform, but relies on the classification of minuscule posets, with corresponding case-by-case analysis. In the exceptional types, it relies on the author's explicit computer calculations reported in \cite{Mandel.Pechenik}. It would be very interesting to have a uniform proof of Theorem~\ref{thm:main}. Likely, giving a uniform proof of (at least the minuscule case of) \cite[Conjecture~44]{Ilango.Pechenik.Zlatin} would be a key step in such an argument; however, that conjecture remains mysterious.

{\bf This paper is organized as follows.} In Section~\ref{sec:minuscule}, we recall the necessary background on minuscule posets, and give their classification. In Section~\ref{sec:tableaux}, we discuss the notions of $K$-promotion and $K$-evacuation on increasing tableaux. In Section~\ref{sec:proof}, we apply the results of Section~\ref{sec:tableaux} for arbitrary graded posets to the case of minuscule posets, culminating in a proof of Theorem~\ref{thm:main}. In this section, we also establish a minuscule analogue of the main theorem of \cite{Pechenik:frames}.
Finally, in Section~\ref{sec:other}, we explore posets with NRP rowmotion outside of the minuscule realm. Our main positive result in this final section is Theorem~\ref{thm:iso_graphs}, showing that NRP rowmotion is a feature of the comparability graph of $P$. We use this result to build some posets with NRP rowmotion from minuscule posets, and find a few other such posets. However, we also exhibit counterexamples to some naive potential generalizations of Theorem~\ref{thm:main}. 

\section{Minuscule posets}
\label{sec:minuscule}

In this section, we recall the definition and classification of minuscule posets. For additional background, see, e.g., \cite{Proctor:minuscule,Stembridge:minuscule,Thomas.Yong:comin,Rush.Shi,Mandel.Pechenik,Hamaker.Patrias.Pechenik.Williams,Okada}.

Let $G$ be a complex connected reductive group. Choose Borel subgroups $B_+, B_- \subseteq G$ intersecting along a maximal algebraic torus $T = B_+ \cap B_-$. This choice gives rise to a partition of the root system $\Phi$ of $G$ into two sets of equal size: the positive roots $\Phi^+$ and the negative roots $\Phi^-$. 

The set $\Phi^+$ has a natural poset structure, where we say $\alpha < \beta$ if $\beta - \alpha \in \Phi^+$. The minimal elements of this poset are the simple roots $\Delta \subseteq \Phi^+$; the simple roots are a linear basis for the span of $\Phi^+$. Certain simple roots $\delta \in \Delta$ are called \newword{minuscule}. These are the ones such that  the coroot $\delta^\vee$ appears with multiplicity at most one in the simple coroot expansion of every $\alpha^\vee$ for $\alpha \in \Phi^+$.

Every positive root $\alpha$ can be written uniquely as a linear combination of simple roots. For each minuscule simple root $\delta$, we obtain a corresponding minuscule poset $M_\delta$ by restricting the poset $\Phi^+$ to the set of positive roots whose simple root expansion involves $\delta$ with nonzero coefficient. 

Minuscule posets fall into three infinite families with an additional two exceptional examples. The cases where $G$ is a general linear group give rise to the rectangles $\bfa \times \bfb$ as minuscule posets. The cases where $G$ is an even-dimensional orthogonal group give rise to two families of minuscule posets, depending on the choice of minuscule root: the \newword{shifted staircases} and the \newword{propellers}. The shifted staircase $S_n$ may be constructed explicitly as the set $\{ (x,y) : x\leq y \in [n] \}$ under entrywise comparison, i.e., $(x,y) \leq (x',y')$ if $x \leq x'$ and $y \leq y'$. The propellers are the posets $J^k( {\bf 2} \times {\bf 2})$ obtained from the rectangle ${\bf 2} \times {\bf 2}$ by applying the order ideal functor $J$ arbitrarily-many times. The case $G = E_6$ yields the \newword{Cayley--Moufang poset} $J^2({\bf 3} \times {\bf 2})$. The case $G=E_7$ yields the \newword{Freudenthal poset} $J^3({\bf 3} \times {\bf 2})$. All other complex connected reductive groups $G$ turn out to merely recover posets already listed, so this paragraph in fact gives a complete accounting of all minuscule posets.

\definecolor{ududff}{rgb}{0.30196078431372547,0.30196078431372547,1.}
\definecolor{xfqqff}{rgb}{0.4980392156862745,0.,1.}
\definecolor{ffqqqq}{rgb}{1.,0.,0.}
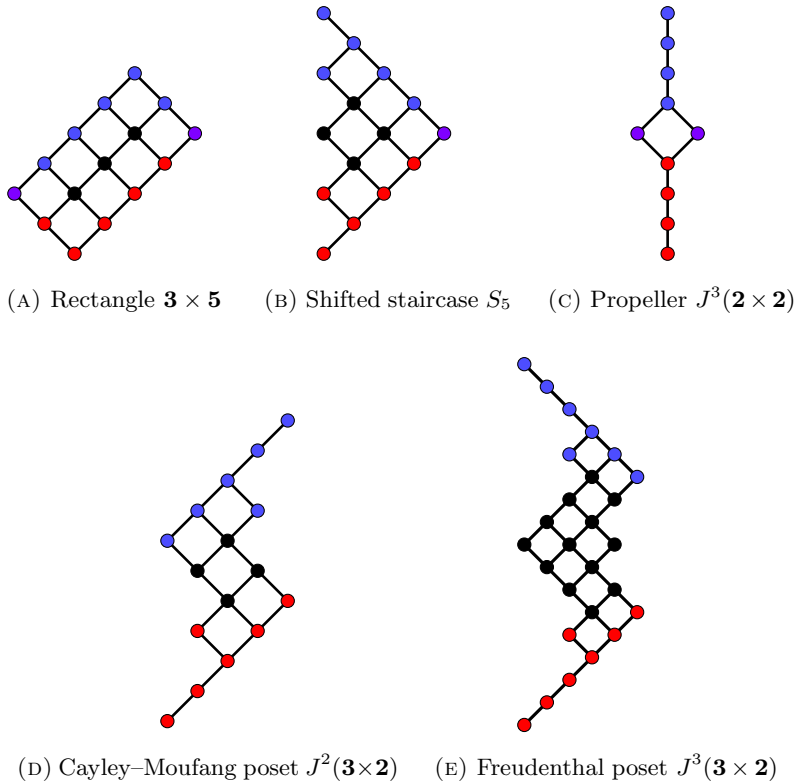
\begin{figure}[ht]
\centering
\begin{subfigure}[t]{1.2in}
	\begin{tikzpicture}[line cap=round,line join=round,>=triangle 45,x=2.0cm,y=2.0cm]
\clip(1.5,-.1) rectangle (3,1.7);
\draw [line width=1.pt] (2.,0.)-- (1.8,0.2);
\draw [line width=1.pt] (1.8,0.2)-- (1.6,0.4);
\draw [line width=1.pt] (2.,0.)-- (2.2,0.2);
\draw [line width=1.pt] (2.2,0.2)-- (2.,0.4);
\draw [line width=1.pt] (2.,0.4)-- (1.8,0.2);
\draw [line width=1.pt] (1.6,0.4)-- (1.8,0.6);
\draw [line width=1.pt] (1.8,0.6)-- (2.,0.4);
\draw [line width=1.pt] (2.,0.4)-- (2.2,0.6);
\draw [line width=1.pt] (1.8,0.6)-- (2.,0.8);
\draw [line width=1.pt] (2.,0.8)-- (2.2,0.6);
\draw [line width=1.pt] (2.2,0.2)-- (2.4,0.4);
\draw [line width=1.pt] (2.4,0.4)-- (2.2,0.6);
\draw [line width=1.pt] (2.,0.8)-- (2.2,1.);
\draw [line width=1.pt] (2.2,1.)-- (2.4,1.2);
\draw [line width=1.pt] (2.4,1.2)-- (2.6,1.);
\draw [line width=1.pt] (2.6,1.)-- (2.8,0.8);
\draw [line width=1.pt] (2.8,0.8)-- (2.6,0.6);
\draw [line width=1.pt] (2.6,0.6)-- (2.4,0.4);
\draw [line width=1.pt] (2.2,0.6)-- (2.4,0.8);
\draw [line width=1.pt] (2.2,1.)-- (2.4,0.8);
\draw [line width=1.pt] (2.4,0.8)-- (2.6,0.6);
\draw [line width=1.pt] (2.4,0.8)-- (2.6,1.);
\begin{scriptsize}
\draw [fill=ffqqqq] (2.,0.) circle (2.5pt);
\draw [fill=ffqqqq] (2.2,0.2) circle (2.5pt);
\draw [fill=ffqqqq] (1.8,0.2) circle (2.5pt);
\draw [fill=xfqqff] (1.6,0.4) circle (2.5pt);
\draw [fill=black] (2.,0.4) circle (2.5pt);
\draw [fill=ffqqqq] (2.4,0.4) circle (2.5pt);
\draw [fill=ffqqqq] (2.6,0.6) circle (2.5pt);
\draw [fill=xfqqff] (2.8,0.8) circle (2.5pt);
\draw [fill=ududff] (1.8,0.6) circle (2.5pt);
\draw [fill=ududff] (2.,0.8) circle (2.5pt);
\draw [fill=ududff] (2.2,1.) circle (2.5pt);
\draw [fill=ududff] (2.4,1.2) circle (2.5pt);
\draw [fill=ududff] (2.6,1.) circle (2.5pt);
\draw [fill=black] (2.4,0.8) circle (2.5pt);
\draw [fill=black] (2.2,0.6) circle (2.5pt);
\end{scriptsize}
\end{tikzpicture}
\caption{Rectangle ${\bf 3} \times {\bf 5}$}\label{fig:1a}
\end{subfigure}
\quad
\begin{subfigure}[t]{1.3in}
\begin{tikzpicture}[line cap=round,line join=round,>=triangle 45,x=2.0cm,y=2.0cm]
\clip(0.6,-0.1) rectangle (1.9,1.7);
\draw [line width=1.pt] (1.,0.)-- (1.2,0.2);
\draw [line width=1.pt] (1.,0.4)-- (1.2,0.2);
\draw [line width=1.pt] (1.2,0.2)-- (1.4,0.4);
\draw [line width=1.pt] (1.4,0.4)-- (1.6,0.6);
\draw [line width=1.pt] (1.6,0.6)-- (1.8,0.8);
\draw [line width=1.pt] (1.,0.4)-- (1.2,0.6);
\draw [line width=1.pt] (1.2,0.6)-- (1.4,0.4);
\draw [line width=1.pt] (1.,0.8)-- (1.2,0.6);
\draw [line width=1.pt] (1.2,0.6)-- (1.4,0.8);
\draw [line width=1.pt] (1.4,0.8)-- (1.6,0.6);
\draw [line width=1.pt] (1.4,0.8)-- (1.6,1.);
\draw [line width=1.pt] (1.6,1.)-- (1.8,0.8);
\draw [line width=1.pt] (1.4,0.8)-- (1.2,1.);
\draw [line width=1.pt] (1.2,1.)-- (1.4,1.2);
\draw [line width=1.pt] (1.4,1.2)-- (1.6,1.);
\draw [line width=1.pt] (1.,0.8)-- (1.2,1.);
\draw [line width=1.pt] (1.2,1.)-- (1.,1.2);
\draw [line width=1.pt] (1.,1.2)-- (1.2,1.4);
\draw [line width=1.pt] (1.,1.6)-- (1.2,1.4);
\draw [line width=1.pt] (1.2,1.4)-- (1.4,1.2);
\begin{scriptsize}
\draw [fill=ffqqqq] (1.,0.) circle (2.5pt);
\draw [fill=ffqqqq] (1.2,0.2) circle (2.5pt);
\draw [fill=ffqqqq] (1.4,0.4) circle (2.5pt);
\draw [fill=ffqqqq] (1.6,0.6) circle (2.5pt);
\draw [fill=xfqqff] (1.8,0.8) circle (2.5pt);
\draw [fill=ududff] (1.6,1.) circle (2.5pt);
\draw [fill=ududff] (1.4,1.2) circle (2.5pt);
\draw [fill=ududff] (1.2,1.4) circle (2.5pt);
\draw [fill=ududff] (1.,1.6) circle (2.5pt);
\draw [fill=ffqqqq] (1.,0.4) circle (2.5pt);
\draw [fill=black] (1.2,0.6) circle (2.5pt);
\draw [fill=black] (1.4,0.8) circle (2.5pt);
\draw [fill=black] (1.,0.8) circle (2.5pt);
\draw [fill=black] (1.2,1.) circle (2.5pt);
\draw [fill=ududff] (1.,1.2) circle (2.5pt);
\end{scriptsize}
\end{tikzpicture}
\caption{Shifted staircase $S_5$}\label{fig:1b}
\end{subfigure}
\quad
\begin{subfigure}[t]{1.3in}
\begin{tikzpicture}[line cap=round,line join=round,>=triangle 45,x=2.0cm,y=2.0cm]
\clip(0.2,-0.1) rectangle (1.4,1.7);
\draw [line width=1.pt] (1.,0.)-- (1.,0.2);
\draw [line width=1.pt] (1.,0.2)-- (1.,0.4);
\draw [line width=1.pt] (1.,0.4)-- (1.,0.6);
\draw [line width=1.pt] (1.,0.6)-- (0.8,0.8);
\draw [line width=1.pt] (0.8,0.8)-- (1.,1.);
\draw [line width=1.pt] (1.,1.)-- (1.2,0.8);
\draw [line width=1.pt] (1.2,0.8)-- (1.,0.6);
\draw [line width=1.pt] (1.,1.)-- (1.,1.2);
\draw [line width=1.pt] (1.,1.2)-- (1.,1.4);
\draw [line width=1.pt] (1.,1.6)-- (1.,1.4);
\begin{scriptsize}
\draw [fill=ffqqqq] (1.,0.) circle (2.5pt);
\draw [fill=ffqqqq] (1.,0.2) circle (2.5pt);
\draw [fill=ffqqqq] (1.,0.4) circle (2.5pt);
\draw [fill=ffqqqq] (1.,0.6) circle (2.5pt);
\draw [fill=xfqqff] (1.2,0.8) circle (2.5pt);
\draw [fill=xfqqff] (0.8,0.8) circle (2.5pt);
\draw [fill=ududff] (1.,1.) circle (2.5pt);
\draw [fill=ududff] (1.,1.2) circle (2.5pt);
\draw [fill=ududff] (1.,1.4) circle (2.5pt);
\draw [fill=ududff] (1.,1.6) circle (2.5pt);
\end{scriptsize}
\end{tikzpicture}
\caption{Propeller $J^3({\bf 2} \times {\bf 2})$}\label{fig:1c}
\end{subfigure}

\begin{subfigure}[t]{2.0in}
\begin{tikzpicture}[line cap=round,line join=round,>=triangle 45,x=2.0cm,y=2.0cm]
\clip(0.0,-0.1) rectangle (1.9,2.3);
\draw [line width=1.pt] (1.,0.)-- (1.2,0.2);
\draw [line width=1.pt] (1.2,0.2)-- (1.4,0.4);
\draw [line width=1.pt] (1.4,0.4)-- (1.2,0.6);
\draw [line width=1.pt] (1.2,0.6)-- (1.4,0.8);
\draw [line width=1.pt] (1.4,0.8)-- (1.6,0.6);
\draw [line width=1.pt] (1.6,0.6)-- (1.4,0.4);
\draw [line width=1.pt] (1.4,0.8)-- (1.6,1.);
\draw [line width=1.pt] (1.6,1.)-- (1.8,0.8);
\draw [line width=1.pt] (1.8,0.8)-- (1.6,0.6);
\draw [line width=1.pt] (1.6,1.)-- (1.4,1.2);
\draw [line width=1.pt] (1.4,1.2)-- (1.2,1.);
\draw [line width=1.pt] (1.2,1.)-- (1.4,0.8);
\draw [line width=1.pt] (1.,1.2)-- (1.2,1.);
\draw [line width=1.pt] (1.,1.2)-- (1.2,1.4);
\draw [line width=1.pt] (1.2,1.4)-- (1.4,1.2);
\draw [line width=1.pt] (1.4,1.2)-- (1.6,1.4);
\draw [line width=1.pt] (1.2,1.4)-- (1.4,1.6);
\draw [line width=1.pt] (1.4,1.6)-- (1.6,1.4);
\draw [line width=1.pt] (1.4,1.6)-- (1.6,1.8);
\draw [line width=1.pt] (1.6,1.8)-- (1.8,2.);
\begin{scriptsize}
\draw [fill=ffqqqq] (1.,0.) circle (2.5pt);
\draw [fill=ffqqqq] (1.2,0.2) circle (2.5pt);
\draw [fill=ffqqqq] (1.4,0.4) circle (2.5pt);
\draw [fill=ffqqqq] (1.6,0.6) circle (2.5pt);
\draw [fill=ffqqqq] (1.8,0.8) circle (2.5pt);
\draw [fill=black] (1.6,1.) circle (2.5pt);
\draw [fill=black] (1.4,1.2) circle (2.5pt);
\draw [fill=ududff] (1.2,1.4) circle (2.5pt);
\draw [fill=ududff] (1.4,1.6) circle (2.5pt);
\draw [fill=ududff] (1.6,1.8) circle (2.5pt);
\draw [fill=ududff] (1.8,2.) circle (2.5pt);
\draw [fill=ffqqqq] (1.2,0.6) circle (2.5pt);
\draw [fill=black] (1.4,0.8) circle (2.5pt);
\draw [fill=black] (1.2,1.) circle (2.5pt);
\draw [fill=ududff] (1.6,1.4) circle (2.5pt);
\draw [fill=ududff] (1.,1.2) circle (2.5pt);
\end{scriptsize}
\end{tikzpicture}
\caption{Cayley--Moufang poset $J^2({\bf 3} \times {\bf 2})$}\label{fig:1d}
\end{subfigure}
\quad
\begin{subfigure}[t]{1.8in}
\begin{tikzpicture}[line cap=round,line join=round,>=triangle 45,x=1.5cm,y=1.5cm]
\clip(0.2,-0.1) rectangle (2.3,3.6);
\draw [line width=1.2pt] (1.,3.2)-- (1.2,3.);
\draw [line width=1.2pt] (1.2,3.)-- (1.4,2.8);
\draw [line width=1.2pt] (1.4,2.8)-- (1.6,2.6);
\draw [line width=1.2pt] (1.6,2.6)-- (1.4,2.4);
\draw [line width=1.2pt] (1.4,2.4)-- (1.6,2.2);
\draw [line width=1.2pt] (1.6,2.2)-- (1.8,2.4);
\draw [line width=1.2pt] (1.6,2.6)-- (1.8,2.4);
\draw [line width=1.2pt] (1.8,2.4)-- (2.,2.2);
\draw [line width=1.2pt] (2.,2.2)-- (1.8,2.);
\draw [line width=1.2pt] (1.6,2.2)-- (1.4,2.);
\draw [line width=1.2pt] (1.4,2.)-- (1.6,1.8);
\draw [line width=1.2pt] (1.8,2.)-- (1.6,1.8);
\draw [line width=1.2pt] (1.6,2.2)-- (1.8,2.);
\draw [line width=1.2pt] (1.4,2.)-- (1.2,1.8);
\draw [line width=1.2pt] (1.2,1.8)-- (1.,1.6);
\draw [line width=1.2pt] (1.,1.6)-- (1.2,1.4);
\draw [line width=1.2pt] (1.2,1.4)-- (1.4,1.6);
\draw [line width=1.2pt] (1.4,1.6)-- (1.2,1.8);
\draw [line width=1.2pt] (1.4,1.6)-- (1.6,1.8);
\draw [line width=1.2pt] (1.6,1.8)-- (1.8,1.6);
\draw [line width=1.2pt] (1.8,1.6)-- (1.6,1.4);
\draw [line width=1.2pt] (1.4,1.6)-- (1.6,1.4);
\draw [line width=1.2pt] (1.6,1.4)-- (1.8,1.2);
\draw [line width=1.2pt] (1.2,1.4)-- (1.4,1.2);
\draw [line width=1.2pt] (1.4,1.2)-- (1.6,1.4);
\draw [line width=1.2pt] (1.,0.)-- (1.2,0.2);
\draw [line width=1.2pt] (1.2,0.2)-- (1.4,0.4);
\draw [line width=1.2pt] (1.4,0.4)-- (1.6,0.6);
\draw [line width=1.2pt] (1.6,0.6)-- (1.8,0.8);
\draw [line width=1.2pt] (1.8,0.8)-- (2.,1.);
\draw [line width=1.2pt] (1.8,1.2)-- (1.6,1.);
\draw [line width=1.2pt] (1.6,1.)-- (1.4,1.2);
\draw [line width=1.2pt] (1.4,0.8)-- (1.6,0.6);
\draw [line width=1.2pt] (1.4,0.8)-- (1.6,1.);
\draw [line width=1.2pt] (1.6,1.)-- (1.8,0.8);
\draw [line width=1.2pt] (1.8,1.2)-- (2.,1.);
\begin{scriptsize}
\draw [fill=ffqqqq] (1.,0.) circle (2.5pt);
\draw [fill=ffqqqq] (1.2,0.2) circle (2.5pt);
\draw [fill=ffqqqq] (1.4,0.4) circle (2.5pt);
\draw [fill=ffqqqq] (1.6,0.6) circle (2.5pt);
\draw [fill=ffqqqq] (1.8,0.8) circle (2.5pt);
\draw [fill=ffqqqq] (2.,1.) circle (2.5pt);
\draw [fill=black] (1.8,1.2) circle (2.5pt);
\draw [fill=black] (1.6,1.4) circle (2.5pt);
\draw [fill=black] (1.4,1.6) circle (2.5pt);
\draw [fill=black] (1.2,1.8) circle (2.5pt);
\draw [fill=ffqqqq] (1.4,0.8) circle (2.5pt);
\draw [fill=black] (1.6,1.) circle (2.5pt);
\draw [fill=black] (1.4,1.2) circle (2.5pt);
\draw [fill=black] (1.8,1.6) circle (2.5pt);
\draw [fill=black] (1.2,1.4) circle (2.5pt);
\draw [fill=black] (1.,1.6) circle (2.5pt);
\draw [fill=black] (1.4,2.) circle (2.5pt);
\draw [fill=black] (1.6,1.8) circle (2.5pt);
\draw [fill=black] (1.8,2.) circle (2.5pt);
\draw [fill=ududff] (2.,2.2) circle (2.5pt);
\draw [fill=black] (1.6,2.2) circle (2.5pt);
\draw [fill=ududff] (1.8,2.4) circle (2.5pt);
\draw [fill=ududff] (1.6,2.6) circle (2.5pt);
\draw [fill=ududff] (1.4,2.4) circle (2.5pt);
\draw [fill=ududff] (1.4,2.8) circle (2.5pt);
\draw [fill=ududff] (1.2,3.) circle (2.5pt);
\draw [fill=ududff] (1.,3.2) circle (2.5pt);
\end{scriptsize}
\end{tikzpicture}
\caption{Freudenthal poset $J^3({\bf 3} \times {\bf 2})$}\label{fig:1e}
\end{subfigure}

\caption{Example Hasse diagrams of the five families of minuscule posets. The bottom tree of each poset is given by the red and purple nodes, while each top tree is given by the blue and purple nodes. The black nodes of each poset are exactly the elements outside of its doubletree.}\label{fig:1}
\end{figure}

Corresponding to each simple root $\delta \in \Delta$ is a maximal parabolic group $P_\delta \supseteq B_+$. In the case $\delta$ is minuscule, we refer to the smooth projective variety $G / P_\delta$ as a \newword{minuscule variety}. For example, in the case that $G$ is a general linear group, the minuscule varieties obtained are Grassmannians. The $B_-$-orbits on $G / P_\delta$ are called \newword{Schubert cells} $\Omega_u$, indexed by elements of the parabolic Weyl group $W_{P_\delta}$. A \newword{Schubert variety} $X_u$ is the closure of the Schubert cell $\Omega_u$; each Schubert variety of $G / P_\delta$ is a union of Schubert cells. Hence, the Schubert varieties of $G / P_\delta$ form a poset under containment. In fact, this poset of Schubert varieties is isomorphic to $J(M_\delta)$, the distributive lattice of order ideals of $M_\delta$; equivalently, $M_\delta$ has an alternative construction as the subposet of join-irreducibles of this poset of subvarieties.

Each minuscule poset $M_\delta$ is self-dual. This duality may be seen explicitly by considering the classification of minuscule posets above.  Conceptually, it is induced by the action of the longest element $w_0^\delta$ of $W_{P_\delta}$ on $M_\delta$. We denote this anti-involution on $M_\delta$ by $\PD$ because it is closely related to the Poincar\'e duality on the compact manifold $G/ P_\delta$. For the rectangles and Cayley--Moufang poset as drawn in Figure~\ref{fig:1}, the anti-involution is given by $180^\circ$ rotation; for the remaining diagrams of Figure~\ref{fig:1}, it is given by reflection across a horizontal line.

The definitions of this paragraph are all essentially borrowed from \cite{Proctor:Dynkin}, where they appear in more generality. 
For $M_\delta$ a minuscule poset, say $x \in M_\delta$ is a \newword{bottom tree element} if the principal ideal generated by $x$ is a chain. Define the \newword{bottom tree} of $M_\delta$ to be the order ideal $\BT(M_\delta)$ of $M_\delta$ consisting of all bottom tree elements. Similarly, define the \newword{top tree} $\TT(M_\delta)$ to be the order filter consisting of all elements generating a principal order filter that is a chain. It is easy to see that $\TT(M_\delta)$ is the image of $\BT(M_\delta)$ under the action of the anti-involution $\PD$, and vice versa. Finally, define the \newword{doubletree} of $M_\delta$ to be the union $\DT(M_\delta) = \BT(M_\delta) \cup \TT(M_\delta)$. In the case that $M_\delta \cong \bfa \times \bfb$ is a rectangle, the doubletree was referred to in \cite{Pechenik:frames,Patrias.Pechenik} as the ``frame'' of the poset; however, that name seems to make less sense for other posets.

\section{Increasing tableaux}\label{sec:tableaux}

As in the proof of the main theorem of \cite{Patrias.Pechenik}, we will prove Theorem~\ref{thm:main} by first reformulating it in terms of $K$-promotion on increasing tableaux. In this section, we recall the relevant definitions and establish the general properties that we will need. In the following section, we will apply these results to the cases of minuscule posets.

Let $[q] = \{1, 2, \dots, q\}$.
An \newword{increasing tableau} (of height $q$) on the finite poset $P$ is a strictly order-preserving map $T : P \to [q]$, i.e.\ for $x<y \in P$, we have $T(x) < T(y)$. Under this name, increasing tableaux first appeared in \cite{Thomas.Yong:K}, although they have a longer history in other contexts (e.g., \cite{Stanley:ordered,Edelman.Greene}). We usually visualize an increasing tableau $T$ by labeling each poset element $x \in T$ with the value $T(x)$. We write $\inc^q(P)$ to denote the set of all increasing tableaux of height $q$ on the poset $P$. 

For $q <r$, we identify $\inc^q(P)$ with a subset of $\inc^r(P)$ in the obvious way. With these identifications, define $\inc(P) = \bigcup_q \inc^q(P)$. Say $T \in \inc(P)$ is \newword{packed} if it is a surjective map onto $[q]$ for some $q$. Note that the set $\incgl(P)$ of all packed increasing tableaux on $P$ is finite.

Thomas and Yong \cite{Thomas.Yong:K} developed a jeu de taquin theory for increasing tableaux, with application to the $K$-theoretic Schubert calculus of minuscule varieties \cite{Clifford.Thomas.Yong,Buch.Samuel}. Building on this theory, \cite{Pechenik:CSP} introduced the following definition of $K$-promotion. 

Let $T \in \inc^q(P)$. Consider the elements of $P$ labeled $1$ and $2$. This subset of the Hasse diagram of $P$ breaks up into connected components, called \newword{tiles}. Say a tile is \newword{trivial} if it has cardinality $1$, and \newword{nontrivial} otherwise. For each trivial tile, do nothing, while for each nontrivial tile, swap the labels $1$ and $2$. The result is an increasing tableau on $P$ with respect to the nonstandard order on $[q]$ where $2 < 1 <3 < \cdots$. Now, consider the connected components of the subposet with labels $1$ and $3$ and repeat this process, successively swapping the pairs of labels $(1,2), (1,3), (1,4), \dots, (1,q)$, obtaining an increasing tableau with respect to the nonstandard order $2< 3 < \cdots < q < 1$. Finally, decrement each label by $1$ and replace any label $0$ with the label $q$. The result is an increasing tableau of height $q$ on $P$ (with respect to the ordinary order on $[q]$), which we call the $K$-promotion $\psi(T)$ of $T$. Note that $\psi(T)$ depends on the choice of $q$ such that $T \in \inc^q(P)$. An example of the $K$-promotion process is shown in Figure~\ref{fig:Kpromotion}. For more details and examples of this $K$-promotion operator, see, e.g., \cite{Pechenik:CSP,Dilks.Pechenik.Striker,Mandel.Pechenik}. 

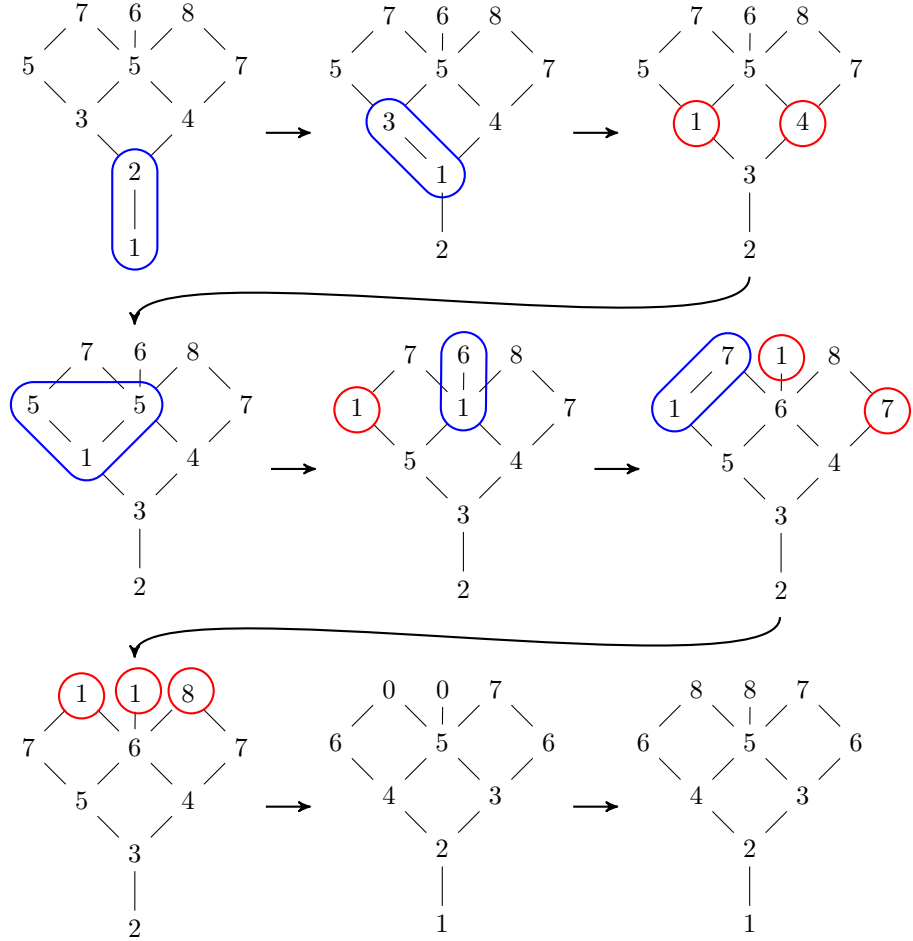
\begin{figure}[ht]
\begin{tikzpicture}
	\node (begin) at (0,0) {\begin{tikzpicture}
	\node (1) at (0,0) {$1$};
\node [above of=1] (2) {$2$};
\node [above left of=2] (3) {$3$};
\node [above right of=2] (5) {$4$};
\node [above left of=3] (4) {$5$};
\node [above right of=3] (6) {$5$};
\node [above right of=5] (8) {$7$};
\node [above right of=4] (7) {$7$};
\node [above right of=6] (9) {$8$};
\node [right= 0.29cm of 7] (10) {$6$};
\draw (1) to (2);
\draw (2) to (3);
\draw (2) to (5);
\draw (3) to (6);
\draw (5) to (6);
\draw (5) to (8);
\draw (8) to (9);
\draw (3) to (4);
\draw (4) to (7);
\draw (6) to (7);
\draw (6) to (9);
\draw (6) to (10);
\draw[thick,blue] \convexpath{1,2}{0.3cm};
\end{tikzpicture}
};
\node [right = 0.6cm of begin] (step1) {\begin{tikzpicture}
	\node (1) at (0,0) {$2$};
\node [above of=1] (2) {$1$};
\node [above left of=2] (3) {$3$};
\node [above right of=2] (5) {$4$};
\node [above left of=3] (4) {$5$};
\node [above right of=3] (6) {$5$};
\node [above right of=5] (8) {$7$};
\node [above right of=4] (7) {$7$};
\node [above right of=6] (9) {$8$};
\node [right= 0.29cm of 7] (10) {$6$};
\draw (1) to (2);
\draw (2) to (3);
\draw (2) to (5);
\draw (3) to (6);
\draw (5) to (6);
\draw (5) to (8);
\draw (8) to (9);
\draw (3) to (4);
\draw (4) to (7);
\draw (6) to (7);
\draw (6) to (9);
\draw (6) to (10);
\draw[thick,blue] \convexpath{2,3}{0.3cm};
\end{tikzpicture}
};
\node [right = 0.6cm of step1] (step2) {\begin{tikzpicture}
	\node (1) at (0,0) {$2$};
\node [above of=1] (2) {$3$};
\node [above left of=2] (3) {$1$};
\node [below left = -0.05cm of 3] (fake) {};
\node [above right of=2] (5) {$4$};
\node [below left = -0.05cm of 5] (fake2) {};
\node [above left of=3] (4) {$5$};
\node [above right of=3] (6) {$5$};
\node [above right of=5] (8) {$7$};
\node [above right of=4] (7) {$7$};
\node [above right of=6] (9) {$8$};
\node [right= 0.29cm of 7] (10) {$6$};
\draw (1) to (2);
\draw (2) to (3);
\draw (2) to (5);
\draw (3) to (6);
\draw (5) to (6);
\draw (5) to (8);
\draw (8) to (9);
\draw (3) to (4);
\draw (4) to (7);
\draw (6) to (7);
\draw (6) to (9);
\draw (6) to (10);
\draw[thick,red] \convexpath{fake}{0.3cm};
\draw[thick,red] \convexpath{fake2}{0.3cm};
\end{tikzpicture}
};
\node [below  = 0.6cm  of begin] (step3) {\begin{tikzpicture}
	\node (1) at (0,0) {$2$};
\node [above of=1] (2) {$3$};
\node [above left of=2] (3) {$1$};
\node [above right of=2] (5) {$4$};
\node [above left of=3] (4) {$5$};
\node [above right of=3] (6) {$5$};
\node [above right of=5] (8) {$7$};
\node [above right of=4] (7) {$7$};
\node [above right of=6] (9) {$8$};
\node [right= 0.29cm of 7] (10) {$6$};
\draw (1) to (2);
\draw (2) to (3);
\draw (2) to (5);
\draw (3) to (6);
\draw (5) to (6);
\draw (5) to (8);
\draw (8) to (9);
\draw (3) to (4);
\draw (4) to (7);
\draw (6) to (7);
\draw (6) to (9);
\draw (6) to (10);
\draw[thick,blue] \convexpath{3,4,6}{0.3cm};
\end{tikzpicture}
};
\node [right = 0.6cm of step3] (step4) {\begin{tikzpicture}
	\node (1) at (0,0) {$2$};
\node [above of=1] (2) {$3$};
\node [above left of=2] (3) {$5$};
\node [above right of=2] (5) {$4$};
\node [above left of=3] (4) {$1$};
\node [below left = -0.05cm of 4] (fake) {};
\node [above right of=3] (6) {$1$};
\node [above right of=5] (8) {$7$};
\node [above right of=4] (7) {$7$};
\node [above right of=6] (9) {$8$};
\node [right= 0.29cm of 7] (10) {$6$};
\draw (1) to (2);
\draw (2) to (3);
\draw (2) to (5);
\draw (3) to (6);
\draw (5) to (6);
\draw (5) to (8);
\draw (8) to (9);
\draw (3) to (4);
\draw (4) to (7);
\draw (6) to (7);
\draw (6) to (9);
\draw (6) to (10);
\draw[thick,blue] \convexpath{6,10}{0.3cm};
\draw[thick,red] \convexpath{fake}{0.3cm};
\end{tikzpicture}
};
\node [right = 0.6cm of step4] (step5) {\begin{tikzpicture}
	\node (1) at (0,0) {$2$};
\node [above of=1] (2) {$3$};
\node [above left of=2] (3) {$5$};
\node [above right of=2] (5) {$4$};
\node [above left of=3] (4) {$1$};
\node [below left = -0.05cm of 10] (fake) {};
\node [above right of=3] (6) {$6$};
\node [above right of=5] (8) {$7$};
\node [above right of=4] (7) {$7$};
\node [below left = -0.05cm of 8] (fake2) {};
\node [above right of=6] (9) {$8$};
\node [right= 0.29cm of 7] (10) {$1$};
\draw (1) to (2);
\draw (2) to (3);
\draw (2) to (5);
\draw (3) to (6);
\draw (5) to (6);
\draw (5) to (8);
\draw (8) to (9);
\draw (3) to (4);
\draw (4) to (7);
\draw (6) to (7);
\draw (6) to (9);
\draw (6) to (10);
\draw[thick,blue] \convexpath{4,7}{0.3cm};
\draw[thick,red] \convexpath{fake}{0.3cm};
\draw[thick,red] \convexpath{fake2}{0.3cm};
\end{tikzpicture}
};
\node [below = 0.6cm of step3] (step6) {\begin{tikzpicture}
	\node (1) at (0,0) {$2$};
\node [above of=1] (2) {$3$};
\node [above left of=2] (3) {$5$};
\node [above right of=2] (5) {$4$};
\node [above left of=3] (4) {$7$};
\node [below left = 0.19cm of 10] (fake) {};
\node [above right of=3] (6) {$6$};
\node [above right of=5] (8) {$7$};
\node [above right of=4] (7) {$1$};
\node [below left = -0.05cm of 7] (fake2) {};
\node [below left = 0.19cm of 9] (fake3) {};
\node [above right of=6] (9) {$8$};
\node [right= 0.29cm of 7] (10) {$1$};
\draw (1) to (2);
\draw (2) to (3);
\draw (2) to (5);
\draw (3) to (6);
\draw (5) to (6);
\draw (5) to (8);
\draw (8) to (9);
\draw (3) to (4);
\draw (4) to (7);
\draw (6) to (7);
\draw (6) to (9);
\draw (6) to (10);
\draw[thick,red] \convexpath{fake}{0.3cm};
\draw[thick,red] \convexpath{fake2}{0.3cm};
\draw[thick,red] \convexpath{fake3}{0.3cm};
\end{tikzpicture}
};
\node [right = 0.6cm of step6] (step7) {\begin{tikzpicture}
	\node (1) at (0,0) {$1$};
\node [above of=1] (2) {$2$};
\node [above left of=2] (3) {$4$};
\node [above right of=2] (5) {$3$};
\node [above left of=3] (4) {$6$};
\node [above right of=3] (6) {$5$};
\node [above right of=5] (8) {$6$};
\node [above right of=4] (7) {$0$};
\node [above right of=6] (9) {$7$};
\node [right= 0.29cm of 7] (10) {$0$};
\draw (1) to (2);
\draw (2) to (3);
\draw (2) to (5);
\draw (3) to (6);
\draw (5) to (6);
\draw (5) to (8);
\draw (8) to (9);
\draw (3) to (4);
\draw (4) to (7);
\draw (6) to (7);
\draw (6) to (9);
\draw (6) to (10);
\end{tikzpicture}
};
\node [right = 0.6cm of step7] (step8) {\begin{tikzpicture}
	\node (1) at (0,0) {$1$};
\node [above of=1] (2) {$2$};
\node [above left of=2] (3) {$4$};
\node [above right of=2] (5) {$3$};
\node [above left of=3] (4) {$6$};
\node [above right of=3] (6) {$5$};
\node [above right of=5] (8) {$6$};
\node [above right of=4] (7) {$8$};
\node [above right of=6] (9) {$7$};
\node [right= 0.29cm of 7] (10) {$8$};
\draw (1) to (2);
\draw (2) to (3);
\draw (2) to (5);
\draw (3) to (6);
\draw (5) to (6);
\draw (5) to (8);
\draw (8) to (9);
\draw (3) to (4);
\draw (4) to (7);
\draw (6) to (7);
\draw (6) to (9);
\draw (6) to (10);
\end{tikzpicture}
};
\draw[pil] (begin) to (step1);
\draw[pil] (step1) to (step2);
\draw[pil] (step2.south) .. controls ([yshift=-3cm] step2) and ([yshift=3cm] step3) .. (step3.north);
\draw[pil] (step3) to (step4);
\draw[pil] (step4) to (step5);
\draw[pil] (step5.south) .. controls ([yshift=-3cm] step5) and ([yshift=3cm] step6) .. (step6.north);
\draw[pil] (step6) to (step7);
\draw[pil] (step7) to (step8);
\end{tikzpicture}
\caption{An example of $K$-promotion on an increasing tableau $T \in \inc^8(H)$, where $H$ is the \emph{bee hummingbird} poset discussed in Section~\ref{sec:other}. At each step, the trivial tiles in circled in red, while the nontrivial tiles are circled in blue.}
\label{fig:Kpromotion}
\end{figure}

Our abuse of notation, using $\psi$ to denote both rowmotion of order ideals and $K$-promotion of increasing tableaux, is justified by the following theorem.

\begin{theorem}[\cite{Dilks.Pechenik.Striker, Mandel.Pechenik,Dilks.Striker.Vorland}]\label{thm:equivbij}
For any graded poset $P$ (such as a minuscule poset), there is a equivariant bijection between $J(P \times \bfc)$ under rowmotion and $\inc^{\rank(P) + c + 1}(P)$ under $K$-promotion.
\end{theorem}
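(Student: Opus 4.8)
The plan is to split the statement into an elementary set-level bijection and a genuinely dynamical equivariance, and to attack the latter by rewriting rowmotion as a product of toggles and transporting that product to the tableau side. First I would make the underlying bijection explicit. Since $P$ is graded, it carries a rank function $\rho\colon P \to \{0,1,\dots,r\}$ with $\rho(y) = \rho(x)+1$ whenever $x \lessdot y$, where $r = \rank(P)$. Given an ideal $I \in J(P \times \bfc)$, set $f_I(p) = \max\{i : (p,i) \in I\}$ (and $f_I(p)=0$ if no such $i$ exists); then $f_I\colon P \to \{0,\dots,c\}$ is order-reversing, and $I \mapsto f_I$ is a bijection onto the order-reversing maps $P \to \{0,\dots,c\}$. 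Composing with the rank shift, define
\[ T_I(p) \;=\; \rho(p) + 1 + c - f_I(p). \]
A short check shows that $T_I$ is strictly order-preserving with values in $[r+c+1]$ (the extreme values $1$ and $r+c+1$ being attained at the bottom and top of maximal chains), so $\Theta\colon I \mapsto T_I$ is a bijection $J(P\times\bfc) \to \inc^{r+c+1}(P)$, with inverse recovering $f_I(p) = \rho(p)+1+c-T(p)$. This disposes of the bijection; all the content lies in showing $\Theta$ (or a small modification of it) intertwines the two dynamics.

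Next I would reduce the equivariance to a toggle computation. By the Cameron--Fon-Der-Flaass description \cite{Cameron.Fonderflaass}, rowmotion on $J(P\times\bfc)$ equals a product of toggles $t_{v_1}\cdots t_{v_N}$ taken over a linear extension read top-to-bottom, where $t_x$ adds or removes $x$ from an ideal when the result is again an ideal and is the identity otherwise. Transporting a single toggle $t_{(p,i)}$ through $\Theta$ yields an involution that alters only the entry $T(p)$, moving it by $\pm 1$ between $\rho(p)+c-i+1$ and $\rho(p)+c-i+2$ exactly when the surrounding entries permit; assembled over the toggles at a fixed height $i$, these are precisely the \emph{$K$-Bender--Knuth involutions} of the increasing-tableau theory \cite{Pechenik:CSP, Dilks.Pechenik.Striker}. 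I would then use the Striker--Williams conjugacy \cite{Striker.Williams} between rowmotion and toggle-promotion to reorganize the toggle order from "by rank slice" to "by file," so that the transported dynamics become a composition of $K$-Bender--Knuth moves that process the labels $1,2,\dots,r+c+1$ in succession; this matches the organization of $K$-promotion, which sweeps the pairs $(1,2),(1,3),\dots$ before the final cyclic decrement. The equivariant bijection asserted in the theorem is then $\Theta$ precomposed with this Striker--Williams conjugation, carrying rowmotion to $K$-promotion rather than to rowmotion.

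The hard part will be verifying that this composition of transported local moves agrees, step for step, with the tile-sliding definition of $K$-promotion given in Section~\ref{sec:tableaux}: that a maximal run of transported toggles reproduces exactly the label swap across one \emph{nontrivial} tile, while the toggles corresponding to \emph{trivial} tiles act as the identity after transport. The genuinely delicate point is the $K$-theoretic behavior in which a tile contains more than two cells and labels effectively merge or split; this is exactly the phenomenon forcing the image to be increasing tableaux (strictly increasing along chains, but with repeats allowed across incomparable elements) rather than standard fillings, and it is where naive Bender--Knuth/jeu-de-taquin bookkeeping can fail. I would control it by an induction on the processed label $j$ that tracks the set of cells currently bearing the active label as a down-set within a slice, checking that merging corresponds to a chain of toggles collapsing to a single effective swap. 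Establishing this compatibility on one slice, uniformly over all graded $P$, is the crux; the global identity $\Theta\circ\psi = \psi\circ\Theta$ then follows by composing over slices and matching the "decrement and wrap $0 \mapsto r+c+1$" step against the final relabeling in $K$-promotion, exactly as in the rectangle case \cite{Dilks.Pechenik.Striker, Dilks.Striker.Vorland}.
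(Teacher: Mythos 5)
The paper does not actually prove Theorem~\ref{thm:equivbij}: it is imported wholesale from the cited references, and the remark immediately following it warns that the easy rank-shift bijection is \emph{not} the equivariant one, the genuine bijection being hard to describe explicitly (Vorland's multidimensional recombination being the best available tool). Your proposal, in effect, reconstructs the proof architecture of those references (Dilks--Pechenik--Striker for rectangles, Dilks--Striker--Vorland for arbitrary graded $P$): the naive bijection $\Theta$, the Cameron--Fon-Der-Flaass toggle factorization of rowmotion, the identification of transported toggle products with $K$-Bender--Knuth involutions, and a conjugation in the toggle group to reconcile the two sweep orders. You also correctly sidestep the trap flagged in the paper's remark, by building the conjugating element into the bijection rather than claiming $\Theta$ itself is equivariant.

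Two caveats. First, a technical slip: under $\Theta$, the toggle $t_{(p,i)}$ moves $T(p)$ between the values $v$ and $v+1$ where $v = \rho(p)+c+1-i$, so the toggles that assemble into the $K$-Bender--Knuth involution swapping $v$ and $v+1$ are those along the \emph{diagonal} slice $i-\rho(p) = c+1-v$, not those at a fixed height $i$ as you wrote; this is precisely why rowmotion (which sweeps by the rank $\rho(p)+i$ of $P\times\bfc$) and the promotion matching $K$-promotion (which sweeps by constant $i-\rho(p)$) differ, and why a conjugation is needed at all. Relatedly, for arbitrary graded $P$ the conjugacy you invoke is not literally Striker--Williams (their rc-poset hypothesis need not hold here); what is needed is the $n$-dimensional lattice projection generalization of Dilks--Striker--Vorland, applied to the projection $(p,i)\mapsto(\rho(p),i)$. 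Second, and more substantively, your proposal defers its own crux: the slice-by-slice verification that the transported diagonal toggle products reproduce the tile-swapping definition of $K$-promotion (equivalently, that $K$-promotion equals the composite of $K$-Bender--Knuth involutions and that each such involution is a transported toggle product) is exactly the main technical content of the cited papers, and you state it as a goal with a sketched induction rather than establish it. So your outline is the right one --- it is, up to the corrections above, the known proof --- but as written it is a plan for that proof rather than a self-contained argument.
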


\begin{remark}
	There is an easy bijection between  $J(P \times \bfc)$ and $\inc^{\rank(P) + c + 1}(P)$. If we identify $J(P \times \bfc)$ with weakly increasing labelings of $P$ and $\inc^{\rank(P) + c + 1}(P)$ with strictly increasing labelings of $P$, we easily biject weakly increasing labelings to strictly increasing ones by adding $r(x)$ to the label of each $x \in P$, where $r(x)$ denotes the number of elements in a largest chain of $P$ with maximum element $x$.
	
	This easy bijection, however, is \textbf{not} the equivariant bijection of Theorem~\ref{thm:equivbij}. Indeed, it is difficult to describe the equivariant bijection explicitly; the best tool for this is the \emph{multidimensional recombination} of \cite{Vorland}. For the purposes of this paper, however, it is happily sufficient to leave Theorem~\ref{thm:equivbij} in a non-constructive form.
\end{remark}

We will also need the following result describing how $K$-promotion of increasing tableaux is controlled by the cases of packed tableaux. For an increasing tableau $T \in \inc^q(P)$ using $d$ distinct labels, define the \newword{deflation} of $T$ to be the unique packed increasing tableau $T^\flat \in \inc^d(P)$ whose labels satisfy the same inequalities as $T$, i.e., we have, for $p,p' \in P$, that $T(p) < T(p')$ if and only if $T^\flat(p) < T^\flat(p')$. 

\begin{proposition}\label{prop:deflation}
	Let $T \in \inc^q(P)$ and suppose $T^\flat \in \inc^d(P)$. Then $\psi^q(T)$ uses the same set of $d$ distinct labels as $T$. Moreover, we have $\psi^q(T)^\flat=\psi^d(T^\flat)$.
\end{proposition}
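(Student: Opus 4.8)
The plan is to isolate, as an engine, a description of how one step of $K$-promotion acts on the \emph{set} of labels used, and then to bootstrap from a single step to the full cycles $\psi^q$ and $\psi^d$.

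First I would prove a ``label-set rotation'' lemma: if $T \in \inc^q(P)$ uses the label set $S \subseteq [q]$, then $\psi(T)$ (computed at height $q$) uses the set $\gamma_q(S)$, where $\gamma_q$ is the cyclic permutation of $[q]$ sending $i \mapsto i-1$ for $i \ge 2$ and $1 \mapsto q$. The key point is that the sliding phase of $K$-promotion---the successive swaps on the pairs $(1,2),(1,3),\dots,(1,q)$---preserves the set of labels present, even though it may change their multiplicities. Indeed, each swap of a nontrivial tile exchanges the two relevant labels but leaves both present in that tile, while each trivial tile is untouched; hence no label value is created and no value has its last occurrence destroyed. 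After the slide the labels still form the set $S$ (now ordered by $2<3<\cdots<q<1$), and the final decrement-and-wrap step applies exactly $\gamma_q$. Since $\gamma_q$ has order $q$, we get $\gamma_q^{\,q} = \mathrm{id}$, so $\psi^q(T)$ uses $\gamma_q^{\,q}(S) = S$, which is the first assertion.

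Next I would prove a refined single-step statement controlling the deflation: $\psi(T)^\flat = \psi^{e}(T^\flat)$, where $e = 1$ if $1 \in S$ and $e = 0$ otherwise, the right-hand promotion being computed at height $d$. When $1 \notin S$ there are no $1$'s to slide, every tile throughout the slide is trivial, so $\psi$ merely decrements every label by $1$; this global shift is invisible to deflation, giving $\psi(T)^\flat = T^\flat$. When $1 \in S$, I would argue that the slide of the $1$'s in $T$ follows the same path as in $T^\flat$: by the label-set invariance just established, every gap value $g \notin S$ stays a gap throughout the slide, so the step $(1,g)$ finds no $g$-cells and does nothing---it is a trivial pause---whereas the non-gap steps see precisely the relative configuration recorded by the inequality structure, which $T$ and $T^\flat$ share by definition of deflation. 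Thus the two slides, followed by the decrement (with the old $1$'s becoming the new maximum in each case), agree after deflation, giving $\psi(T)^\flat = \psi(T^\flat)$.

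Finally I would assemble the two ingredients. Over the cycle $S, \gamma_q(S), \dots, \gamma_q^{\,q-1}(S)$ of label sets visited by $T, \psi(T), \dots, \psi^{q-1}(T)$, the value $1$ lies in $\gamma_q^{\,k}(S)$ precisely when $1+k \equiv s \pmod q$ for some $s \in S$; as $k$ runs over $0,1,\dots,q-1$ this occurs exactly $|S| = d$ times. Applying the refined single-step statement along the cycle, the deflation is fixed by the $q-d$ steps whose label set omits $1$ and advanced by one height-$d$ promotion at each of the remaining $d$ steps, yielding $\psi^q(T)^\flat = \psi^{d}(T^\flat)$, as desired. The main obstacle is the $1 \in S$ case of the refined single-step statement: making rigorous that the $K$-jeu-de-taquin slide of the minimal label depends only on the inequality structure of $T$ and is unaffected by gaps. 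This rests on the label-set invariance of the slide (so that gap values genuinely remain absent and their steps are vacuous) together with a careful check that the surviving nontrivial swaps are dictated entirely by which cells are comparable and which currently carry the minimal label---data identical for $T$ and $T^\flat$. The bookkeeping of the decrement/wrap at the end, ensuring the promoted minima land consistently as the new maxima in both heights, also needs attention but is routine once the slides are matched.
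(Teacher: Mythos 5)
Your proof is correct and follows essentially the same route as the paper: the paper's proof consists of citing exactly the two facts you prove from scratch, namely the cyclic rotation of the (binary) content under $K$-promotion (\cite[Lemma~2.1]{Dilks.Pechenik.Striker}, stated there for rectangles but valid verbatim for any finite poset) and the single-step deflation commutation $\psi(T)^\flat = \psi^{e}(T^\flat)$ with $e\in\{0,1\}$ (\cite[Proposition~5.1]{Mandel.Pechenik}), iterated $q$ times with your counting argument implicit. The only difference is that you supply self-contained proofs of these two ingredients rather than citations.
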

\begin{proof}
	The fact that $T$ and $\psi^q(T)$ use the same set of $d$ distinct labels is proved explicitly for $P$ a rectangle as \cite[Lemma~2.1]{Dilks.Pechenik.Striker}. The proof given there extends without change to prove the corresponding fact for arbitrary finite posets.
	The last statement of the proposition is then immediate by applying \cite[Proposition~5.1]{Mandel.Pechenik} $q$ times.
\end{proof}

In fact, the orbit structure of $K$-promotion is entirely determined by its action on packed tableaux. For $T \in \inc^q(P)$, its \newword{binary content vector} is the $0$--$1$ vector of length $q$ whose $i$th coordinate is $1$ if the value $i$ appears as a label in $T$, and is $0$ if it does not. The following result is \cite[Theorem~6.1]{Mandel.Pechenik}.

\begin{proposition}[\cite{Mandel.Pechenik}]\label{prop:deflationperiod}
	Let $T \in \inc^q(P)$ with binary content vector $v$. Let $\ell$ be the least positive integer such that moving the first digit of $v$ to the end $\ell$ times recovers $v$. Then the $\psi$-orbit of $T$ has cardinality
	\[
	\tau = \frac{\ell \tau'}{\gcd(\ell d/q, \tau')},
	\]
	where $\tau'$ is the cardinality of the $\psi$-orbit of $T^\flat \in \inc^d(P)$.
\end{proposition}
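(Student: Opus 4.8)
The plan is to track, along the $\psi$-orbit of $T$, the pair consisting of the binary content vector and the deflation, and to show that these two data evolve almost independently. The first observation is that $T$ is recovered from the pair $(v, T^\flat)$: fixing the content vector $v$ fixes the set of $d$ labels used, and then $T^\flat$ records the relative order, so relabeling the values $1, \dots, d$ of $T^\flat$ by the $d$ present labels in increasing order reconstructs $T$. Hence for any $k$ we have $\psi^k(T) = T$ if and only if both the content vector returns to $v$ and $\psi^k(T)^\flat = T^\flat$.

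The key lemma---which is the technical heart and the step I expect to be the main obstacle---describes a single application of $\psi$. I claim that (a) the binary content vector of $\psi(T)$ is obtained from $v$ by moving the first digit to the end, and (b) writing $v = (v_1, \dots, v_q)$, we have $\psi(T)^\flat = T^\flat$ when $v_1 = 0$ and $\psi(T)^\flat = \psi(T^\flat)$ when $v_1 = 1$. Part (a) is routine: the $K$-promotion process decrements every label by one (sending the value $1$, if present, to $q$), so value $i < q$ is used by $\psi(T)$ exactly when $i+1$ is used by $T$, and value $q$ is used exactly when $1$ is used by $T$. Part (b) when $v_1 = 0$ is equally easy, since with no $1$'s present the sliding phase is vacuous and $\psi$ merely decrements all labels, leaving the relative order---and hence the deflation---unchanged. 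The substantive case is $v_1 = 1$: here one must check that the absent intermediate labels do not disturb the jeu-de-taquin sliding of the value $1$. Writing the present labels as $1 = a_1 < a_2 < \cdots < a_d$, the swaps $(1,2), (1,3), \dots, (1,q)$ act nontrivially only on the pairs $(1, a_2), \dots, (1, a_d)$, so the $1$ visits exactly the present labels, in the same order as it would while sliding through the consecutive labels $2, \dots, d$ of $T^\flat$; thus the two sliding sequences are combinatorially identical and $\psi(T)^\flat = \psi(T^\flat)$. This single-step statement is essentially \cite[Proposition~5.1]{Mandel.Pechenik}, applied once; indeed, applying it $q$ times recovers Proposition~\ref{prop:deflation}, since over a full period of $q$ steps the first content bit equals $1$ exactly $d$ times.

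Granting the lemma, I iterate it. The content vector evolves deterministically as the cyclic left shift, independently of the deflation, and returns to $v$ exactly at the multiples of $\ell$. Along the orbit the deflation receives a single promotion precisely at those steps $j$ where the current first content bit equals $1$; letting $n(k)$ count such steps among $j = 0, \dots, k-1$, the lemma gives $\psi^k(T)^\flat = \psi^{n(k)}(T^\flat)$. Because $v$ is a concatenation of $q/\ell$ copies of a period of length $\ell$, that period contains exactly $s := d\ell/q$ ones (in particular $d\ell/q$ is an integer), so each block of $\ell$ consecutive steps contributes $s$ promotions; hence $n(\ell t) = ts$ for every $t \ge 0$.

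Finally, I assemble the two conditions. Since $\psi^k(T) = T$ forces the content to return, the orbit length $\tau$ is a multiple of $\ell$; writing $k = \ell t$, the content condition becomes automatic and the remaining requirement is $\psi^{ts}(T^\flat) = T^\flat$, i.e.\ $\tau' \mid ts$. The least positive $t$ with $\tau' \mid ts$ is $t = \tau'/\gcd(s, \tau')$, so the least positive $k$ is
\[
\tau = \ell \cdot \frac{\tau'}{\gcd(s, \tau')} = \frac{\ell \tau'}{\gcd(\ell d/q, \tau')},
\]
as claimed. The only genuinely delicate point is the $v_1 = 1$ case of the key lemma; everything after it is elementary orbit bookkeeping and a single $\gcd$ computation.
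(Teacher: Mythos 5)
Your proof is correct. Note, however, that the paper itself gives no argument for Proposition~\ref{prop:deflationperiod}: it is imported verbatim as \cite[Theorem~6.1]{Mandel.Pechenik}, so the comparison here is really with that cited source rather than with anything in this paper. Your argument is in essence a reconstruction of the cited proof: your ``key lemma'' part (b) is exactly \cite[Proposition~5.1]{Mandel.Pechenik} (the same single-step statement the paper invokes to prove Proposition~\ref{prop:deflation}), part (a) is the one-step refinement of the content-rotation fact credited to \cite[Lemma~2.1]{Dilks.Pechenik.Striker}, and the rest --- the observation that $T$ is recovered from the pair $(v, T^\flat)$, that the content vector rotates independently while the deflation is promoted once per step whose leading content bit is $1$, and the final count giving $\tau = \ell\tau'/\gcd(\ell d/q, \tau')$ --- is the same orbit bookkeeping. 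Your verification of the delicate $v_1=1$ case is sound: since the swap phase never changes the set of labels present, the swaps $(1,k)$ for absent $k$ are vacuous, the tile structures of $T$ and $T^\flat$ correspond under deflation at every stage, and the final decrement-with-wraparound sends the slid $1$'s to the maximum value in both tableaux, so the deflations agree. So this is a correct, self-contained proof of a result the paper leaves as a black box, following essentially the same route as the original.
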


\begin{lemma}\label{lem:packedRP}
	Let $P$ be a finite poset. Suppose $T \in \inc^q(P)$ is a tableau whose $\psi$-orbit has cardinality $k$ with $\gcd(k,q)=1$. Then $T$ is packed.
\end{lemma}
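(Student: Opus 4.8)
The plan is to read the conclusion directly off the period formula of Proposition~\ref{prop:deflationperiod}, so that almost all of the dynamical content is already packaged for us. Write $d$ for the number of distinct labels used by $T$, so that $T^\flat \in \inc^d(P)$, and let $v$ be the binary content vector of $T$: a $0$--$1$ vector of length $q$ with exactly $d$ ones. Let $\ell$ and $\tau'$ be as in the proposition, so that the orbit cardinality is
\[
k = \frac{\ell\,\tau'}{\gcd(\ell d/q,\ \tau')}.
\]
The whole argument will rest on two elementary divisibility facts about $v$, after which the hypothesis $\gcd(k,q)=1$ does the remaining work.

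First I would record that $\ell \mid q$. Indeed, $\ell$ is the minimal period of $v$ regarded as a cyclic word of length $q$; the cyclic shifts fixing $v$ form a subgroup of $\Z$, necessarily of the form $\ell\Z$, and this subgroup contains $q\Z$ because a full rotation fixes $v$, whence $\ell \mid q$. Writing $q = \ell s$, the vector $v$ is the $s$-fold repetition of its length-$\ell$ prefix, so the number $m := \ell d/q$ of ones in that prefix is a nonnegative integer. Setting $g := \gcd(m,\tau')$, we then have $k = \ell\,(\tau'/g)$, and since $g \mid \tau'$ this exhibits $k$ as an integer multiple of $\ell$. Thus $\ell \mid k$.

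Combining $\ell \mid k$ with $\ell \mid q$ gives $\ell \mid \gcd(k,q) = 1$, so $\ell = 1$. But $\ell = 1$ means $v$ is fixed by a single cyclic shift, i.e.\ $v$ is constant. Assuming $P \neq \emptyset$ (the empty case being trivial), $T$ uses at least one label, so $v$ is not identically zero; being constant, it is therefore identically one, which says $d = q$. Hence $T$ is surjective onto $[q]$, that is, packed, as desired.

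The proof is short precisely because Proposition~\ref{prop:deflationperiod} already encodes the hard part; the only points that require care are the two structural claims about $v$ — that its minimal cyclic period $\ell$ divides its length $q$, and that the per-period count $m = \ell d/q$ is an integer — together with the harmless empty-poset edge case. I expect the only real subtlety to be making sure the formula is applied with the correct reading of $\ell$ (the minimal cyclic period) and of the factor $\ell d/q$, rather than any genuinely difficult step: once $\ell\mid k$ and $\ell \mid q$ are in hand, the coprimality hypothesis forces $\ell=1$ immediately.
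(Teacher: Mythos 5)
Your proof is correct and follows essentially the same route as the paper's: apply Proposition~\ref{prop:deflationperiod}, observe that $\ell \mid q$ and that the formula forces $\ell \mid k$, conclude $\ell = 1$ from $\gcd(k,q)=1$, and hence that $T$ is packed. The only difference is that you spell out details the paper leaves implicit (why $\ell d/q$ is an integer, and why $\ell=1$ forces the content vector to be all ones), which is fine.
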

\begin{proof}
Let $v$ be the binary content vector of $T$ and let $\ell$ be the least positive integer such that moving the first digit of $v$ to the end $\ell$ times recovers $v$. Clearly, $\ell$ divides $q$.

Let $k'$ denote the cardinality of the $\psi$-orbit of $T^\flat \in \inc^d(P)$. Then, by Proposition~\ref{prop:deflationperiod}, we have
\[
k = \frac{\ell k'}{\gcd(\ell d/q, k')}.
\]
Since the denominator of this fraction is transparently a divisor of $k'$, this implies that $k$ is a multiple of $\ell$. Therefore, $\gcd(k,q) \geq \ell$.  But we are assuming that $\gcd(k,q)=1$, so we must have $\ell = 1$. It follows that $T$ is packed.
\end{proof}

The above results give rise to the following corollary, which will be very useful to us.

\begin{corollary}\label{cor:check_packed}
	Let $P$ be a finite graded poset. Then $P$ has NRP rowmotion if and only if, for every $c>0$, no packed increasing tableau $T \in \inc^{\rank(P) + c + 1}(P)$ has $K$-promotion orbit cardinality relatively prime to its alphabet size $\rank(P) + c + 1$.
\end{corollary}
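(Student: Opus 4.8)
The plan is to prove Corollary~\ref{cor:check_packed} by combining Theorem~\ref{thm:equivbij} with Lemma~\ref{lem:packedRP}, establishing the biconditional in both directions. By Theorem~\ref{thm:equivbij}, rowmotion on $J(P \times \bfc)$ is equivariantly isomorphic to $K$-promotion on $\inc^{\rank(P)+c+1}(P)$; in particular, the multiset of orbit cardinalities of $\psi$ on $J(P \times \bfc)$ equals the multiset of orbit cardinalities of $K$-promotion on $\inc^{\rank(P)+c+1}(P)$. Therefore the NRP condition for $P$ (no $\psi$-orbit of $J(P \times \bfc)$ has cardinality relatively prime to $\rank(P)+c+1$) translates verbatim into the statement that no $K$-promotion orbit on $\inc^{\rank(P)+c+1}(P)$ has cardinality relatively prime to $\rank(P)+c+1$. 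So it suffices to show that, fixing $q := \rank(P)+c+1$, the absence of any $K$-promotion orbit of cardinality coprime to $q$ \emph{among all tableaux} in $\inc^q(P)$ is equivalent to the absence of such an orbit \emph{among packed tableaux} in $\inc^q(P)$.

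The backward implication (packed condition $\Rightarrow$ full condition) is where Lemma~\ref{lem:packedRP} does the work. First I would prove the contrapositive: suppose some tableau $T \in \inc^q(P)$ has $K$-promotion orbit cardinality $k$ with $\gcd(k,q)=1$. By Lemma~\ref{lem:packedRP}, applied with this $q$, any such $T$ must already be packed. Hence a coprime-cardinality orbit among all tableaux forces a coprime-cardinality orbit among packed tableaux, so if no packed tableau has a coprime orbit cardinality, then no tableau does at all. The forward implication (full condition $\Rightarrow$ packed condition) is trivial, since packed tableaux form a subset of all tableaux: if \emph{no} tableau in $\inc^q(P)$ has orbit cardinality coprime to $q$, then in particular no \emph{packed} tableau does. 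Assembling these two directions over all $c>0$ yields exactly the claimed equivalence.

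The only genuine subtlety, and the step I would be most careful about, is the hypothesis matching in the application of Lemma~\ref{lem:packedRP}: the lemma is stated for $T \in \inc^q(P)$ with $\gcd(k,q)=1$ where $q$ is the alphabet size, and I must confirm that the relevant $q$ here is precisely $\rank(P)+c+1$, which is the height fixed by Theorem~\ref{thm:equivbij}. Because the corollary phrases the packed condition using ``alphabet size $\rank(P)+c+1$'' for packed tableaux in $\inc^{\rank(P)+c+1}(P)$ (a packed tableau of height $q$ is surjective onto $[q]$, so its alphabet size genuinely equals $q$), the coprimality conditions align perfectly, and no reindexing is needed. I would note that the orbit cardinality appearing in Lemma~\ref{lem:packedRP} is the full $K$-promotion orbit size, matching the orbit size transported by the equivariant bijection, so there is no discrepancy between ``orbit in $\inc^q(P)$'' and ``orbit in $J(P\times\bfc)$.''

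I expect no real obstacle here: the corollary is essentially a clean repackaging of the already-established Lemma~\ref{lem:packedRP} through the dictionary of Theorem~\ref{thm:equivbij}, reducing the NRP property to a finite check (since $\incgl(P)$ is finite for each bounded height). The proof is short, and its value is practical rather than technical: it licenses verifying NRP rowmotion by examining only packed tableaux, which is exactly what the later case-by-case and computer-assisted arguments for Theorem~\ref{thm:main} will exploit.
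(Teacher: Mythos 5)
Your proposal is correct and follows the paper's own proof exactly: Theorem~\ref{thm:equivbij} translates NRP rowmotion into a statement about $K$-promotion orbits in $\inc^{\rank(P)+c+1}(P)$, and Lemma~\ref{lem:packedRP} (applied in contrapositive form) shows that any orbit of cardinality coprime to the alphabet size must consist of packed tableaux, so checking packed tableaux suffices. Your explicit attention to the hypothesis matching of the lemma and the triviality of the forward direction merely spells out what the paper leaves implicit.
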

\begin{proof}
	The equivariant bijection of Theorem~\ref{thm:equivbij} shows that $P$ having NRP rowmotion is equivalent to there existing no $T \in \inc^{\rank(P) + c + 1}(P)$, for any $c>0$, whose $K$-promotion orbit has cardinality relatively prime to $\rank(P) + c + 1$.

Lemma~\ref{lem:packedRP} shows that it is sufficient to check this condition for packed tableaux.
\end{proof}

Note that, for any particular finite graded poset $P$, Corollary~\ref{cor:check_packed} turns checking whether $P$ has NRP rowmotion into a finite verification.
In Section~\ref{sec:proof}, we will apply Corollary~\ref{cor:check_packed} to the minuscule posets to prove Theorem~\ref{thm:main}. 

Related to $K$-promotion is the involution \newword{$K$-evacuation} on increasing tableaux. $K$-evacuation was introduced in \cite{Thomas.Yong:K}, where helpful examples may be found.
For $T \in \inc^q(P)$ and $k \leq q$, let $T_{\leq k}$ denote the restriction of $T$ to the order ideal of $P$ labeled by elements less than or equal to $k$. Then the $K$-evacuation of $T$ is the unique increasing tableau $\mathcal{E}(T) \in \inc^q(P)$ such that, for every $k$, $\mathcal{E}(T)_{\leq k}$ and $\psi^{q-k}(T)_{\leq k}$ are defined on the same subposet of $P$. We will use $K$-evacuation in Section~\ref{sec:proof} as an ingredient in our proof of Theorem~\ref{thm:main}.

\section{Minuscule increasing tableaux}\label{sec:proof}
In this section, we study the $K$-promotion of increasing tableaux on minuscule posets. The main results of this section are a minuscule analogue of \cite[Theorem~2]{Pechenik:frames} and a proof of Theorem~\ref{thm:main}.

First, we prove a doubletree theorem, extending \cite[Theorem~2]{Pechenik:frames} from rectangles to arbitrary minuscule posets. For $M$ a minuscule poset and $T \in \inc(M)$, we write $\DT(T)$ for the restriction of $T$ to the subposet $\DT(M)$.

\begin{theorem}\label{thm:doubletree}
	Let $T$ be an increasing tableau of height $q$ on the minuscule poset $M$. Then 
	\[\DT(T) = \DT \big(\psi^{q}(T) \big).\]
\end{theorem}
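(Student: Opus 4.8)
The plan is to pass to packed tableaux, then use the self-duality $\PD$ of $M$ to collapse the two halves of the doubletree into a single statement about the bottom tree, and finally to prove bottom-tree invariance by tracking the low-label order ideals of $T$ under $K$-promotion.

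First I would reduce to the packed case. Let $d$ be the number of distinct labels of $T$, so that $T^\flat \in \inc^d(M)$. By Proposition~\ref{prop:deflation}, $\psi^q(T)$ uses the same set of $d$ labels as $T$, and $\psi^q(T)^\flat = \psi^d(T^\flat)$. Because $T$ and $\psi^q(T)$ use the same label set, deflation applies the identical order-preserving relabeling $\beta$ to both; hence, for each $x \in \DT(M)$, we have $\psi^q(T)(x) = T(x)$ if and only if $\beta(\psi^q(T)(x)) = \beta(T(x))$, i.e.\ if and only if $\psi^d(T^\flat)(x) = T^\flat(x)$. Thus it suffices to prove the theorem when $T$ is packed and $q = d$. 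This reduction also turns the claim into a finite check for each fixed poset, which is what connects it to the computer verifications of \cite{Mandel.Pechenik} in the exceptional types.

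Next I would reduce from the doubletree to the bottom tree using self-duality. Define the complement-rotation $T^\ast(x) = q + 1 - T(\PD(x))$. Since $\PD$ is an anti-automorphism, $T^\ast$ is again an increasing tableau of height $q$, the operation $\ast$ is an involution, and it exchanges $\BT(M)$ with $\TT(M)$. Reversing the alphabet converts ``promote the smallest label out through the top'' into ``promote the largest label out through the bottom,'' so $\ast$ intertwines $\psi$ with $\psi^{-1}$, giving $(\psi^q T)^\ast = \psi^{-q}(T^\ast)$. Suppose we have shown $\BT(\psi^q S) = \BT(S)$ for every increasing tableau $S$; applying this to $S = \psi^{-q}S'$ also yields $\BT(\psi^{-q}S') = \BT(S')$. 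Then for $y = \PD(x)$ with $x \in \BT(M)$, complementing the equality $\BT(\psi^{-q}(T^\ast)) = \BT(T^\ast)$ gives $\TT(\psi^q(T)) = \TT(T)$. Hence the whole theorem follows once we establish bottom-tree invariance $\BT(\psi^q(T)) = \BT(T)$.

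For the bottom tree I would argue via level sets. To show that $T$ and $\psi^q(T)$ agree on $\BT(M)$, it is enough to show for every threshold $k$ that the order ideal $\{x \in \BT(M) : T(x) \le k\}$ coincides with $\{x \in \BT(M) : \psi^q(T)(x) \le k\}$, since each label is recovered from these nested ideals. The decisive structural feature is that every $x \in \BT(M)$ has a principal ideal that is a chain, so labels strictly increase along each branch of the tree $\BT(M)$ and the low-label ideals are determined by a single cutoff along each branch. I would then follow the trajectory of the vacated minimal label as it migrates upward through $M$ and is reinserted as the maximal label, and show that after exactly $q$ promotion steps every cutoff returns to its original position. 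The main obstacle is precisely here: a $K$-promotion tile incident to a bottom-tree element $x$ can extend through a cover of $x$ lying \emph{outside} $\BT(M)$, so the bottom-tree labels do not evolve autonomously, and their one-step dynamics genuinely depend on the rest of $M$. Controlling this interaction is where the minuscule hypothesis enters. For the infinite families I would extend the tableau-theoretic analysis of \cite{Pechenik:frames} for rectangles, using the explicit model of $\BT(M)$ as two chains joined at $\hat 0$ (rectangles and shifted staircases) or its fork analogue (propellers), to prove the period-$q$ return of the cutoffs; for the two exceptional posets, having reduced to a finite check on packed tableaux, I would verify the claim by direct computation following \cite{Mandel.Pechenik}. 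I expect proving this period-$q$ return uniformly across the infinite families, in spite of the non-autonomous one-step dynamics, to be the technical heart of the argument.
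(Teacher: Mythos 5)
Your reduction to packed tableaux is correct and matches the paper's first step, and the complementation--rotation $T^\ast(x) = q+1-T(\PD(x))$ intertwining $\psi$ with $\psi^{-1}$ is a plausible (though here unproven) symmetry that would indeed cut the problem down to bottom-tree invariance. The genuine gap is at exactly the point you yourself flag as ``the technical heart'': you never prove bottom-tree invariance for the infinite families. For rectangles one can cite \cite[Theorem~2]{Pechenik:frames}, but for shifted staircases---the only infinite family where the statement is genuinely new---your plan is to track the cutoffs along branches of $\BT(M)$ through $q$ promotion steps, and you correctly observe that promotion tiles incident to $\BT(M)$ escape into the rest of the poset, so these labels do not evolve autonomously. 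No mechanism is offered to overcome this obstacle; asserting that the cutoffs return after exactly $q$ steps is just a restatement of the theorem. Two further inaccuracies compound the problem: $\BT(S_k)$ is not ``two chains joined at $\hat 0$'' but the chain $(1,1) < \dots < (1,k)$ together with the extra node $(2,2)$, and these extra nodes $(2,2)$, $(k-1,k-1)$ turn out to be precisely the delicate ones; and propellers are an infinite family, so they cannot be disposed of by a finite computation (the paper instead cites the stronger periodicity $T = \psi^q(T)$ proved uniformly for propellers and the Cayley--Moufang poset in \cite{Mandel.Pechenik}).

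For contrast, the paper's shifted-staircase argument avoids tracking promotion paths entirely. It doubles a packed tableau $T$ on $S_k$ into a symmetric tableau $T^2$ on ${\bf k} \times {\bf k}$, invokes the rectangle result \cite[Proposition~17]{Pechenik:frames} that $\PD(T^2)$ and $\mathcal{E}(T^2)$ have the same doubletree, transports this back to $S_k$ via $\sqrt{\PD(T^2)} = \PD(T)$ and $\sqrt{\mathcal{E}(T^2)} = \mathcal{E}(T)$ (the latter from \cite[Equation~(7.1)]{Buch.Samuel}), handles the leftover nodes $(2,2)$ and $(k-1,k-1)$ by a Pieri-strip argument based on \cite[Theorem~4.6]{Clifford.Thomas.Yong}, and then concludes $\DT(T) = \DT(\psi^q(T))$ from the growth-diagram identity $\psi^q = \PD \circ \mathcal{E} \circ \PD \circ \mathcal{E}$. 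Some reduction of this kind to known rectangle and evacuation results---or a new idea of comparable strength---is what your proposal is missing; as written, it is a plan whose hardest step is acknowledged but not executed.
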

\begin{proof}
	By Proposition~\ref{prop:deflation}, it is sufficient to prove the theorem for packed tableaux $T$, so assume $T$ is packed.
	
	We prove the theorem in cases, according to the classification of minuscule posets described in Section~\ref{sec:minuscule}. For rectangles, the theorem is exactly \cite[Theorem~2]{Pechenik:frames}. For propellers and the Cayley--Moufang poset, the stronger result $T = \psi^q(T)$ was shown in \cite[\textsection 7]{Mandel.Pechenik}. For the Freudenthal poset, the theorem may be verified explicitly for the $624493$ packed tableaux of that shape; we checked this fact using SageMath \cite{sagemath}.
	
	It remains to consider the case of shifted staircases. Given an increasing tableau $T$ of shape $S_k$, its \newword{doubling} is the increasing tableau $T^2$ of rectangular shape ${\bf k} \times {\bf k}$ given by gluing together two copies of $T$ as follows. Identifying ${\bf k} \times {\bf k}$ with the elements of $[k] \times [k]$ and identifying $S_k$ with the subset of pairs $(x,y) \in [k] \times [k]$ with $x \leq y$ (as described in Section~\ref{sec:minuscule}), we define $T^2(x,y) = T^2(y,x) = T(x,y)$ for all $1 \leq x \leq y \leq k$. Conversely, given a tableau $U$ of rectangular shape ${\bf k} \times {\bf k}$ that is symmetric in the sense that $U(x,y) = U(y,x)$ for all $x,y$, we define its \newword{radical} to be the increasing tableau $\sqrt{U}$ of shape $S_k$ given by restricting to the set of ordered pairs $(x,y)$ with $x \leq y$.
	
	Now, let $T$ be a packed increasing tableau of shape $S_k$ and consider its doubling $T^2$. By \cite[Proposition~17]{Pechenik:frames}, we have that $\PD(T^2)$ and $\mathcal{E}(T^2)$ have the same doubletree. Now it is easy to see that $\sqrt{\PD(T^2)} = \PD(T)$. The analogous statement $\sqrt{\mathcal{E}(T^2)} = \mathcal{E}(T)$ is immediate from \cite[Equation~(7.1)]{Buch.Samuel}. It follows that $\PD(T)$ and $\mathcal{E}(T)$ agree on the restriction of the doubletree of ${\bf k} \times {\bf k}$ to $S_k$, namely the nodes $(x,y)$ with $x=1$ or $y=k$. These nodes make up most of the doubletree of $S_k$.
	
	Next, we study the nodes $(2,2)$ and $(k-1,k-1)$, the remaining nodes of $\DT(S_k)$. An \newword{h-strip} in an increasing tableau $U$ of shape $S_k$ is a sequence of nodes $(x_1, y_1), \dots, (x_t,y_t)$ such that, for all $1\leq i < t$, we have $y_i < y_{i+1}$, $x_i \geq x_{i+1}$, and $U(x_i,y_i) \leq U(x_{i+1},y_{i+1})$. Similarly, a \newword{v-strip} is a sequence of nodes $(x_1, y_1), \dots, (x_t,y_t)$ such that we have $x_i < x_{i+1}$, $y_i \geq y_{i+1}$, and $U(x_i,y_i) \leq U(x_{i+1},y_{i+1})$.
	A \newword{Pieri strip} in an increasing tableau $U$ of shape $S_k$ is a sequence of distinct nodes $(x_1, y_1), \dots, (x_t,y_t), \dots (x_u,y_u)$ such that $(x_1, y_1), \dots, (x_t,y_t)$ forms a v-strip, $(x_{t+1}, y_{t+1}), \dots, (x_u,y_u)$ forms an h-strip, and we have $U(x_i,y_i) \leq U(x_{i+1},y_{i+1})$ for all $1 \leq i < u$. (Pieri strips are so named because they are related to Pieri classes in the cohomology of an orthogonal Grassmannian.)
	
	 By \cite[Theorem~4.6]{Clifford.Thomas.Yong}, $\mathcal{E}(T)_{\leq a}$ will be entirely supported on nodes of the form $(1,y)$ if and only if $T_{>n-a}$ forms a Pieri strip. Now observe that if $T_{>n-a}$ contains the node $(k-1,k-1)$, then it also contains the nodes $(k-1,k)$ and $(k,k)$, but these three nodes can never appear together in any Pieri strip of any tableau. Conversely, if $T_{>n-a}$ does not contain the node $(k-1,k-1)$, then it contains only nodes of the form $(i,k)$ and forms a Pieri strip (just a v-strip, in fact). Hence, $\mathcal{E}(T)_{\leq a}$ will be entirely supported on nodes of the form $(1,y)$ if and only if the node $(k-1,k-1)$ does not appear in $T_{>n-a}$. It follows that $\mathcal{E}(T)(2,2) = \PD(T)(2,2)$. Similarly, we may argue that $\mathcal{E}(T)(k-1,k-1) = \PD(T)(k-1,k-1)$. Thus, we have $\DT(\PD(T)) = \DT(\mathcal{E}(T))$.
	 
	  Since $\DT(\PD(T)) = \DT(\mathcal{E}(T))$ for all tableaux $T \in \inc^q(S_k)$, we also have
	  \[
	  \DT(T) = \DT(\PD(\mathcal{E}(\PD(\mathcal{E}(T))))).
	  \] However, we also have the equivalence of operators 
	  \[
	  \psi^q = \PD \circ \mathcal{E} \circ \PD \circ \mathcal{E}
	  \] by inspection of $K$-theoretic growth diagrams, as explained for rectangles in \cite[Lemma~3.1]{Pechenik:CSP}. Thus, we conclude 
	  \[\DT(T) = \DT \big(\psi^{q}(T) \big),\]
	  as desired.
	 	\end{proof}

In the following sections of this paper, we will only have need of the shifted staircase cases of Theorem~\ref{thm:doubletree}. 
We note that while many results for minuscule posets extend to more general $d$-complete posets (e.g., \cite{Ilango.Pechenik.Zlatin,Kim.Yoo,Naruse.Okada,Proctor.Scoppetta}), small calculations indicate that Theorem~\ref{thm:doubletree} does not extend in this fashion, although \cite[Conjecture~44]{Ilango.Pechenik.Zlatin} seems relevant.

\subsection{Proof of Theorem~\ref{thm:main}}

Finally, we turn to proving the main theorem of this paper. By Theorem~\ref{thm:equivbij}, it is equivalent to establish the following result on $K$-promotion of increasing tableaux of minuscule shape.

\begin{proposition}\label{prop:maintab}
	Let $M$ be a minuscule poset and let $q > \rank(M)+1$. Suppose the $\psi$-orbit of $T \in \inc^q(M)$ has cardinality $k$. Then $\gcd(k,q)>1$.
\end{proposition}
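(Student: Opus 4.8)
The plan is to argue contrapositively, reduce to packed tableaux, and then dispatch the minuscule posets type by type using the classification of Section~\ref{sec:minuscule}. Suppose $\gcd(k,q) = 1$; then Lemma~\ref{lem:packedRP} forces $T$ to be packed. Since a packed tableau surjects onto $[q]$ we get $q \le |M|$, while the hypothesis $q > \rank(M)+1$ gives $q \ge \rank(M)+2$; so it suffices to rule out $\psi$-orbits coprime to $q$ among packed tableaux with $\rank(M)+2 \le q \le |M|$. For the rectangles $M = \bfa \times \bfb$, this is, through the equivariant bijection of Theorem~\ref{thm:equivbij}, precisely Theorem~\ref{thm:rectangle}, so that case is already settled.

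For the Cayley--Moufang and Freudenthal posets the inequality $\rank(M)+2 \le q \le |M|$ leaves only finitely many heights, hence finitely many packed tableaux, and I would invoke the computer verification of \cite{Mandel.Pechenik}. The propellers $M = J^k(\mathbf{2}\times\mathbf{2})$ form an infinite family but are almost as simple: here $|M| = \rank(M)+2$, so $q = |M|$ is forced and the packed tableaux are just the two linear extensions differing by the order of the unique two-element antichain. Since $\psi^q = \mathrm{id}$ on these by \cite[\textsection 7]{Mandel.Pechenik}, it remains only to observe (also from \cite{Mandel.Pechenik}) that $\psi$ swaps the two tableaux rather than fixing them, producing a single orbit of size $2$ with $\gcd(2,q) = 2 > 1$.

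The essential case is the shifted staircases $M = S_n$, where Theorem~\ref{thm:doubletree} is the key input. Still assuming $\gcd(k,q) = 1$, I would examine the sequence $i \mapsto \DT(\psi^i(T))$. It is periodic of period dividing $k$ because $\psi^k(T) = T$, and applying Theorem~\ref{thm:doubletree} to each $\psi^i(T) \in \inc^q(S_n)$ shows $\DT(\psi^{i+q}(T)) = \DT(\psi^i(T))$, so the period also divides $q$; hence it divides $\gcd(k,q) = 1$ and the doubletree is constant along the whole orbit. The task is then to contradict this rigidity. The boundary of $\DT(S_n)$ is pinned for free: in any packed staircase tableau $T(1,1) = 1$ and $T(1,2) = 2$, and dually $T(n,n) = q$ and $T(n-1,n) = q-1$. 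The contradiction must therefore be extracted from the interior doubletree cells, above all the two diagonal cells $(2,2)$ and $(n-1,n-1)$, by showing $\psi$ cannot preserve all of their labels at once, so that $\DT(\psi(T)) \ne \DT(T)$ and the period is genuinely greater than $1$.

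The main obstacle is exactly this last step. A purely $\psi^q$-level comparison with the rectangle $\mathbf{n} \times \mathbf{n}$ through the doubling $T \mapsto T^2$ is insufficient: using $\sqrt{\PD(T^2)} = \PD(T)$ and $\sqrt{\mathcal{E}(T^2)} = \mathcal{E}(T)$ one finds that doubling intertwines $\psi^q$ on $S_n$ with $\psi^q$ on $\mathbf{n} \times \mathbf{n}$, but a short computation shows the resulting orbit-size identities are fully consistent with $\gcd(k,q) = 1$, so Theorem~\ref{thm:rectangle} does not transfer mechanically. This is precisely why the finer doubletree invariance of Theorem~\ref{thm:doubletree} is required. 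I would therefore attack the labels of $(2,2)$ and $(n-1,n-1)$ directly, via the Pieri-strip criterion for $K$-evacuation of \cite{Clifford.Thomas.Yong} used in the proof of Theorem~\ref{thm:doubletree}, in order to show these labels must move under a single application of $\psi$; making this rigorous is the heart of the matter.
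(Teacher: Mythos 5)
Your proposal tracks the paper's proof closely for most of its length: the reduction to packed tableaux via Lemma~\ref{lem:packedRP}, the case split along the classification, the citation of \cite{Patrias.Pechenik} for rectangles and of \cite{Mandel.Pechenik} for the exceptional types (your hands-on treatment of propellers via the two linear extensions is a harmless variant of the paper's appeal to the same computations), and, crucially, the derivation that $\gcd(k,q)=1$ together with Theorem~\ref{thm:doubletree} forces $\DT(\psi^i(T))$ to be constant along the whole orbit, hence $\DT(T)=\DT(\psi(T))$. Up to that point you and the paper coincide.

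The gap is the step you yourself flag as ``the heart of the matter'': you never prove that $\DT(T)=\DT(\psi(T))$ is impossible for a packed $T\in\inc^q(S_n)$ with $q>\rank(S_n)+1$. This is exactly the content of the paper's Lemma~\ref{lem:minimalstaircase}, which shows (for any graded poset with $\hat0$, $\hat1$, and $\BT(P)\cap\TT(P)\neq\emptyset$) that single-step doubletree invariance forces $V$ to be the minimal tableau and $z=\rank(P)+1$, contradicting $q>\rank(M)+1$. Moreover, your sketched plan for closing the gap points in an unpromising direction, in two respects. First, the Pieri-strip criterion of \cite{Clifford.Thomas.Yong} computes $K$-evacuation, and evacuation enters only through $\psi^q=\PD\circ\mathcal{E}\circ\PD\circ\mathcal{E}$; it is a tool for controlling $\psi^q$, not a single application of $\psi$, so it cannot directly show that labels ``move under one step.'' Second, the contradiction in the paper does not come from the diagonal cells $(2,2)$ and $(n-1,n-1)$ at all: it comes from an elementary flow-path (streambed) argument showing that every element of $\DT(S_n)$ must lie in the flow path of $\psi$, whence the labels along the maximal chain $(1,1)<(1,2)<\cdots<(1,n)<(2,n)<\cdots<(n,n)$ inside the doubletree are forced to be the consecutive integers $1,2,\dots,z$; since that chain has $\rank(S_n)+1$ elements, this pins $q=z=\rank(S_n)+1$, the desired contradiction. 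Your ``pinned for free'' boundary values ($T(1,1)=1$, $T(1,2)=2$, and their duals) only fix four cells and follow from packedness alone; the full first row and last column are pinned only \emph{after} exploiting $\DT(T)=\DT(\psi(T))$ via the flow path, which is the missing argument.
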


For $q = \rank(M)+1$, there is a unique increasing tableau $T \in \inc^q(M)$. Following \cite{Buch.Samuel}, we call this unique tableau the \newword{minimal tableau} $T_M$. Clearly, the $\psi$-orbit of $T_M$ has cardinality $1$. Under the equivariant bijection of Theorem~\ref{thm:equivbij}, $T_M$ corresponds to the unique order ideal of the empty poset $M \times {\bf 0}$.

We will need the following lemma, a shifted staircase analogue of \cite[Proposition~3.2]{Patrias.Pechenik}. For $V \in \inc^z(P)$, the \newword{flow path} of $V$ is the set of pairs $\{p \lessdot p'\}$ of poset elements such that $p'$ covers $p$ and both appear in the same nontrivial tile during some stage of the application of $\psi$ to the tableau $V$. The union of all the pairs in the flow path of $V$ is its \newword{streambed}. Note that if $p$ is in the streambed of $V$ and is neither maximal nor minimal in $P$, then it appears as the larger element of at least one pair in the flow path and also appears as the smaller element of at least one pair in the flow path.

\begin{lemma}\label{lem:minimalstaircase}
	Let $P$ be a graded poset with a unique maximum element $\hat{1}$ and a unique minimum element $\hat{0}$ and such that $\BT(P) \cap \TT(P) \neq \emptyset$. Suppose that $V \in  \inc^z(P)$ is any tableau satisfying $\DT(V) = \DT(\psi(V))$. Then $V = T_{P}$ is the minimal tableau and $z = \rank(P)+1$.
\end{lemma}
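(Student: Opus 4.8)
The plan is to pin down the behaviour of $K$-promotion along a single maximal chain lying entirely inside the doubletree, and to show that the hypothesis $\DT(V)=\DT(\psi(V))$ forces the labels along that chain to be $1,2,\dots,\rank(P)+1$ with no gaps. First I would fix an element $m\in\BT(P)\cap\TT(P)$ and form the chain $C\colon \hat 0=c_0\lessdot c_1\lessdot\cdots\lessdot c_n=\hat 1$ obtained by concatenating the principal ideal $[\hat 0,m]$ and the principal filter $[m,\hat 1]$; both are chains by the definitions of $\BT$ and $\TT$, so $C$ is a maximal chain of length $n=\rank(P)$, and moreover $[\hat 0,m]\subseteq\BT(P)$ while $[m,\hat 1]\subseteq\TT(P)$, so every $c_i$ lies in $\DT(P)$. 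Writing $v_i=V(c_i)$, the doubletree hypothesis says exactly that $\psi(V)(c_i)=v_i$ for all $i$. A first easy observation is that the label $1$ must occur in $V$: otherwise $\psi$ simply decrements every label, so $\psi(V)(\hat 0)=V(\hat 0)-1\neq V(\hat 0)$, contradicting $\hat 0\in\DT(P)$. Since $\hat 0$ is the unique minimum, this forces $v_0=V(\hat 0)=1$, and dually the unique maximum carries the top label, so $\psi(V)(\hat 1)=z$ and hence $v_n=z$.

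The engine of the argument is the traveling label $1$. At every stage of the promotion process the tableau is strictly increasing for the current order on the alphabet, so the cells currently labeled $1$ form an antichain; this antichain starts at $\{\hat 0\}$, migrates weakly upward, and ends at $\{\hat 1\}$. Consequently its intersection with the chain $C$ is always a single cell that moves weakly up through $c_0,c_1,\dots,c_n$. I would next show that each $c_i$ with $i<n$ lies in the streambed of $V$: a cell outside the streambed is never in a nontrivial tile, so its label is untouched by all the swaps and $\psi(V)(c_i)=v_i-1\neq v_i$, again contradicting the hypothesis. Now I invoke the structural remark that an interior streambed element occurs both as the larger and as the smaller element of flow-path pairs, together with the tree structure of $C$: each $c_i\le m$ lies in $\BT(P)$ and so has the unique lower cover $c_{i-1}$, forcing $\{c_{i-1}\lessdot c_i\}$ into the flow path, while each $c_i\ge m$ with $i<n$ lies in $\TT(P)$ and so has the unique upper cover $c_{i+1}$, forcing $\{c_i\lessdot c_{i+1}\}$ into the flow path. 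Ranging over $i$ and taking the union, every cover edge of $C$ belongs to the flow path; equivalently, the traveling $1$ actually ascends through each $c_i$ in turn.

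It remains to extract the values. Fix $i<n$ and consider the stage at which the edge $\{c_i\lessdot c_{i+1}\}$ fires, say the swap of $1$ and $j_i$: at that moment $c_i$ is labeled $1$ and $c_{i+1}$ is labeled $j_i$, and after the swap $c_i$ is labeled $j_i$. Because the $1$-antichain only moves up and never returns to $c_i$, the cell $c_i$ keeps the label $j_i$ for the rest of the process, so $\psi(V)(c_i)=j_i-1$; comparing with $\psi(V)(c_i)=v_i$ gives $j_i=v_i+1$. On the other hand, $c_{i+1}$ can change its label only by first becoming a $1$, and by monotonicity of the front this cannot happen strictly before the edge fires; hence at the firing stage $c_{i+1}$ still carries its original label, so $j_i=v_{i+1}$. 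Combining, $v_{i+1}=v_i+1$ for all $i$, whence $v_i=i+1$ and $z=v_n=n+1=\rank(P)+1$. For $z=\rank(P)+1$ every maximal chain of $P$ must be labeled $1,2,\dots,\rank(P)+1$, so $V(x)=\rho(x)+1$ is the minimal tableau $T_P$, as claimed.

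I expect the delicate step to be the value extraction in the last paragraph, namely controlling the $K$-theoretic flow finely enough to guarantee that off-chain branches of the traveling $1$ neither bypass any $c_i$ nor alter $c_{i+1}$ before the relevant edge fires. The antichain property of the $1$-cells together with the uniqueness of covers supplied by $\BT(P)$ and $\TT(P)$ is what makes this work, and this is the precise point at which the hypothesis $\BT(P)\cap\TT(P)\neq\emptyset$ is essential; it is also the part that must be argued carefully rather than quoted, in the spirit of the rectangle computation in \cite{Patrias.Pechenik}.
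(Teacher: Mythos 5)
Your proof is correct and follows essentially the same route as the paper's: you use the hypothesis $\DT(V)=\DT(\psi(V))$ to force the doubletree elements into the streambed, then use the unique lower covers in $\BT(P)$ and unique upper covers in $\TT(P)$ (via the remark that interior streambed elements occur as both the larger and smaller element of flow-path pairs) to force the relevant cover edges into the flow path, and finally read off consecutive labels along a maximal chain through $m \in \BT(P)\cap\TT(P)$ to get $z=\rank(P)+1$ and $V=T_P$. The only cosmetic differences are that the paper propagates labels over the entire bottom and top trees rather than a single chain, and compresses your careful label-tracking step into the one-line assertion $\psi(V)(b')=V(b)-1$ for a flow-path pair $\{b'\lessdot b\}$.
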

\begin{proof}
We must have $V(\hat{0}) = 1$, for otherwise $\psi(V)(\hat{0}) = V(\hat{0}) -1$, contradicting $\DT(V) = \DT(\psi(V))$. For the same reason, every $d \in \DT(V)$ must be in the streambed of $V$.

For $b \in \BT(P)$ with $b \neq \hat{0}$, there is a unique $b' \in P$ that $b$ covers. Since $b$ is in the streambed of $V$, $\{b' \lessdot b\}$ must then be a pair in the flow path. Therefore, $\psi(V)(b') = V(b) - 1$. But by assumption, $\psi(V)(b') = V(b')$. It follows that $V$ labels the bottom tree of $P$ as in the minimal tableau $T_P$.

Since $\hat{1}$ is in the streambed of $V$, we have $\psi(V)(\hat{1}) = z$. Therefore, since $\psi(V)(\hat{1}) = V(\hat{1})$ by assumption, we see that $V(\hat{1}) = z$. Now, for each $t \in \TT(P)$ with $t \neq \hat{1}$, there is a unique $t'$ that covers $t$. Since $t$ is in the streambed of $V$, the pair $\{t \lessdot t'\}$ must be in the flow path, so $\psi(V)(t) = V(t') - 1$. But by assumption $\psi(V)(t) = V(t)$, so $V(t) = V(t') -1$. 

Consider any maximal chain from $\hat{0}$ to $\hat{1}$ that is contained in $\DT(P)$. Such chains exist since we assume $\BT(P) \cap \TT(P) \neq \emptyset$. The tableau $V$ labels elements of that chain with consecutive positive integers, starting at $1$ and ending at $z$. But a maximal chain of $P$ has length $\rank(P)$. So $z = \rank(P)+1$. Since $T_P$ is the unique element of $\inc^{\rank(P)+1}(P)$, we must have $V = T_P$.
\end{proof}

\begin{proof}[Proof of Proposition~\ref{prop:maintab}]
Let $M$ be a minuscule poset and let $q \geq \rank(M)+1$. Suppose $T \in \inc^q(M)$ is a tableau whose $\psi$-orbit has cardinality $k$ with $\gcd(k,q)=1$. We will show that $T$ is the minimal tableau $T_M$.

By Lemma~\ref{lem:packedRP}, $T$ is a packed tableau.

We now break into cases according to the classification of minuscule posets from Section~\ref{sec:minuscule}. The rectangle case of the proposition is \cite[Theorem~2.4]{Patrias.Pechenik}. For the propellers and the Cayley-Moufang poset, the explicit calculations of \cite{Mandel.Pechenik} show that every packed tableau $T \in \inc^q(M)$ has $k$ dividing $q$, with $k=1$ only for the minimal tableau $T_M$. Similarly, for the Freudenthal poset, the computations of \cite{Mandel.Pechenik} show that every packed tableau $T \in \inc^q(M)$ either has $k$ dividing $q$, $k=2q$, or $k=3q$; moreover, $k=1$ only for the minimal tableau $T_M$.

Thus, it only remains to handle the shifted staircases. Suppose $T \in \inc^q(S_m)$. 
Consider the cyclic group $C_k = \langle g \rangle$ of order $k$. Let $C_k$ act on the $\psi$-orbit of $T$ by $g \cdot U = \psi(U)$ for all tableaux $U$ in the $\psi$-orbit. Since $\gcd(k,q) = 1$, the element $g^q$ is a generator of $C_k$. Thus, every $U$ in the $\psi$-orbit of $T$ is of the form $\psi^{qh}(T)$ for some positive integer $h$. 

Theorem~\ref{thm:doubletree} tells us that $\DT(V) = \DT(\psi^q(V))$ for all $V \in \inc^q(S_m)$. Thus, it follows from the previous paragraph that every $U$ in the $\psi$-orbit of $T$ satisfies $\DT(U) = \DT(T)$. Therefore, $\DT(T) = \DT(\psi(T))$.
Since $\BT(S_m) \cap \TT(S_m) \neq \emptyset$, we have then by Lemma~\ref{lem:minimalstaircase} that $T = T_{S_m}$, as desired. 
\end{proof}

Theorem~\ref{thm:main} is now immediate by combining Proposition~\ref{prop:maintab} with Theorem~\ref{thm:equivbij}.
\qed

\section{Other posets}
\label{sec:other}

In Theorem~\ref{thm:main}, we showed that all minuscule posets have NRP rowmotion. In this section, we consider possible extensions of Theorem~\ref{thm:main} to other posets. 

We begin with our positive results in this direction. The \newword{comparability graph} of a poset $P$ is the graph $G(P)$ whose vertex set is the set of elements of $P$ and with vertices $x,y$ adjacent in $G(P)$ if and only if $x$ and $y$ are comparable in $P$. Many properties of posets that depend only on the graph-isomorphism class of the comparability are collected and discussed in \cite{Hopkins:doppelganger}. The following theorem gives yet another such property.

\begin{theorem}\label{thm:iso_graphs}
	Let $P$ and $Q$ be finite posets with isomorphic comparability graphs, and let $q \in \mathbb{Z}^+$ be a positive integer. Then there is a cardinality-preserving bijection between the $K$-promotion orbits of $\inc^q(P)$ and the $K$-promotion orbits of $\inc^q(Q)$.
\end{theorem}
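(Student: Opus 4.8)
The plan is to exploit the structure theory of comparability graphs. First I would dispose of the ambient graph isomorphism: given an isomorphism $G(P) \xrightarrow{\sim} G(Q)$, I may transport the poset structure of $P$ onto the vertex set of $Q$, obtaining a poset $P'$ with $P' \cong P$ (so $K$-promotion orbits of $\inc^q(P)$ and $\inc^q(P')$ match trivially) and $G(P') = G(Q)$ literally equal. Thus it suffices to treat the case where $P$ and $Q$ share a vertex set and $G(P) = G(Q)$, i.e.\ they are two transitive orientations of the same graph. The classical modular-decomposition theory of comparability graphs (Gallai; Dushnik--Miller; see the discussion in \cite{Hopkins:doppelganger}) then tells us that any two such orientations are connected by a finite sequence of \emph{module reversals}: operations that select a module (autonomous set) $A \subseteq P$ and replace the induced order $P|_A$ by its dual $(P|_A)^\ast$, leaving all comparabilities between $A$ and its complement, and all comparabilities within the complement, unchanged. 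Since the desired bijection is a property closed under composition, I would reduce the whole theorem to a single such reversal.

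Next I would construct the candidate bijection for one module reversal. Fix a module $A$ of $P$, let $Q$ be $P$ with $A$ reversed, and take $T \in \inc^q(P)$. Because $A$ is a module, every element strictly below $A$ carries a label smaller than every label on $A$, and every element strictly above $A$ carries a label larger; hence all labels on $A$ lie in an integer window $(\mathrm{lo},\mathrm{hi})$, where $\mathrm{lo}$ (resp.\ $\mathrm{hi}$) is the largest (smallest) label on an element below (above) $A$, with the conventions $\mathrm{lo}=0$, $\mathrm{hi}=q+1$ when no such element exists. I would define $\Phi_A(T)$ to agree with $T$ off $A$ and, on $A$, to replace $T|_A$ by the labeling $a \mapsto \mathrm{lo}+\mathrm{hi}-T(a)$. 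Complementation within the window reverses comparabilities, so this is strictly increasing on $(P|_A)^\ast$, and it respects the external constraints because the window is determined entirely by labels outside $A$, which $\Phi_A$ fixes. The same window is computed in $Q$, so $\Phi_A \colon \inc^q(P) \to \inc^q(Q)$ is a bijection whose inverse is the analogous windowed complementation.

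The heart of the argument, and the step I expect to be hardest, is to show that $\Phi_A$ sends $\psi$-orbits to $\psi$-orbits of equal size; the cleanest route is to prove that $\Phi_A$ conjugates promotion, up to taking an inverse, i.e.\ $\Phi_A \circ \psi_P = \psi_Q^{-1} \circ \Phi_A$. Here the module property localizes most of the promotion dynamics: a tile (a two-label connected component of the Hasse diagram) lies either entirely inside $A$ or entirely outside it, \emph{except} for tiles that straddle $A$ through cover relations between $A$ and its complement, and controlling these straddling tiles is the genuine obstacle. I would first settle the base case $A = P$, where $Q = P^\ast$ and $\Phi_A$ is global label complementation $T \mapsto q+1-T$; the identity $\Phi \circ \psi_P = \psi_{P^\ast}^{-1} \circ \Phi$ should follow from the symmetry of $K$-theoretic growth diagrams, generalizing \cite[Lemma~3.1]{Pechenik:CSP} and the $K$-evacuation relation $\mathcal{E} \circ \psi \circ \mathcal{E} = \psi^{-1}$. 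To handle a proper module I would reduce, using modular decomposition a second time, to the two elementary reversal types — dualizing a \emph{prime} node and transposing two adjacent children of a \emph{series} node — and verify the conjugation for each: the series case is an explicit adjacent-block computation, while the prime case localizes to the base-case argument applied inside the quotient-with-substituted-module, with the straddling tiles analyzed via the growth-diagram description.

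Assembling these pieces, composing the orbit-preserving bijections along a reversal sequence connecting $P$ to $Q$ yields the desired cardinality-preserving bijection of $K$-promotion orbits, completing the proof of Theorem~\ref{thm:iso_graphs}.
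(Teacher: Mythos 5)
Your first two steps---transporting the order structure so that $G(P)=G(Q)$ literally, invoking the theorem that two orders with the same comparability graph are linked by a sequence of dualizations of autonomous sets (the paper cites \cite{Dreesen.Poguntke.Winkler} for this), reducing to a single dualization, and defining a bijection $\inc^q(P)\to\inc^q(Q)$ that complements labels on $A$ and fixes everything else---match the paper's proof. But the lemma you call ``the cleanest route,'' the operator identity $\Phi_A \circ \psi_P = \psi_Q^{-1}\circ \Phi_A$, is false whenever $A$ is a proper subset with nontrivial exterior dynamics, and no analysis of straddling tiles can rescue it. Concrete counterexample: let $P=Q$ be the chain $x<y<z$, let $A=\{y\}$ (a singleton is autonomous; dualizing it changes nothing), and take $q=4$. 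Writing tableaux as $(T(x),T(y),T(z))$, your windowed complementation is $T\mapsto (T(x),\,T(x)+T(z)-T(y),\,T(z))$. For $T=(1,2,3)$ one computes $\psi_P(T)=(1,2,4)$, hence $\Phi_A(\psi_P(T))=(1,3,4)$; on the other side $\Phi_A(T)=(1,2,3)$ and $\psi_Q^{-1}\bigl((1,2,3)\bigr)=(2,3,4)$, since $(2,3,4)$ promotes to $(1,2,3)$. The two sides disagree. (The same failure occurs for the paper's version of the flip, which on a singleton $A$ is the identity: your identity would then force $\psi=\psi^{-1}$, absurd since this $\psi$ has order $4$.) The conceptual obstruction is that $\Phi_A$ fixes all labels outside $A$, so $\Phi_A\circ\psi_P$ advances the exterior configuration forward in time while $\psi_Q^{-1}\circ\Phi_A$ runs it backward; these can only agree when $A=P$, which is exactly your base case---the only case where the conjugation is true.

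What the paper proves instead is a weaker, ``fibered'' statement, and that is where the real content lies. With $f$ the label flip (note the paper flips within the set of labels \emph{actually used} on $A$, so the binary content vector is preserved; your windowed complementation changes which labels occur, an additional complication since promotion dynamics depends on actual label values, not merely their relative order on $A$), it shows: (i) for every $i$, the iterates $\psi^i_P(T)$ and $\psi^i_Q(f(T))$ agree identically on $P\setminus A$; and (ii) on $A$, the tableau $T$ evolves by \emph{forward} $K$-promotion of the deflated restriction $h(T)$, while $f(T)$ evolves by \emph{inverse} $K$-promotion of $h(T)$, both clocked by the same counter $j$, the number of the first $i$ streambeds that pass through $A$. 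Since $\psi^{j}(h(T))=h(T)$ if and only if $\psi^{-j}(h(T))=h(T)$, the orbits of $T$ and $f(T)$ have equal cardinality even though no operator identity of your proposed form holds; the bijection sends orbits to orbits without intertwining the promotion operators. So your outline identifies the right bijection and correctly locates the hard spot (tiles crossing the boundary of $A$), but the mechanism you propose there must be replaced by this relative forward/backward flow-path analysis; your further reduction to prime and series nodes of the modular decomposition is then unnecessary.
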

\begin{proof}
		We will need the characterization of having isomorphic comparability graphs given in \cite{Dreesen.Poguntke.Winkler} (although known earlier to various people). We will borrow the relevant terminology from \cite{Hopkins:doppelganger}. 
		
		An \newword{autonomous} subset of $P$ is a set $A \subseteq P$ such that each $b \in P \setminus A$ relates to every element of $A$ in the same way; that is to say, for all  $b \in P \setminus A$ and $a,a' \in A$, we have
		\begin{align*}
			a < b \; &\text{if and only if} \; a' < b, \; \text{and} \\
			a > b\; &\text{if and only if} \; a' > b.
		\end{align*} 
		The poset $P'$ \newword{obtained by dualizing $A$} is the poset obtained from $P$ by reversing the relation of each pair of comparable elements of $A$ while leaving all other relations the same; that is, for $p_1, p_2 \in P$,
		\begin{align*}
			p_1 <_{P'} p_2 \; &\text{if and only if} \; p_1 <_P p_2, \; &\text{if $p_1, p_2$ not both in $A$;} \\
			p_1 >_{P'} p_2 \; &\text{if and only if} \; p_1 <_P p_2, \; &\text{if $p_1, p_2 \in A$.}
		\end{align*} 
		
		It is clear that dualizing an autonomous subset preserves the comparability graph. The theorem we need from \cite{Dreesen.Poguntke.Winkler} is the converse statement that any finite posets with isomorphic comparability graphs are related by a finite sequence of dualizations of autonomous subsets.
		
		By induction, we may therefore assume $P$ and $Q$ are related by one such dualization. So let $A \subseteq P$ be an autonomous subset of $P$ such that $Q$ is the subset obtained by dualizing $A$. In particular, we will treat $P$ and $Q$ as living on the same ground set.
		
		Let $T \in \inc^q(P)$ be any increasing tableau. We define a tableau $f(T) \in \inc^q(Q)$ by flipping the labels on $A$ as follows. 
		Let $\{v_1 <v_2 < \dots < v_k \}$ be the set of labels appearing on the subposet $A$ in $T$.
		For $b \in Q \setminus A$, define $f(T)(b) = T(b)$. For $a \in A$ with $T(a) = v_i$, define $f(T)(a) = v_{k+1-i}$. Note that $f(T)$ is an increasing tableau since $A$ is autonomous.
		
		Let $R$ be the poset obtained from $P$ by deleting $A$ and replacing it with a $k$ element chain $C = \{c_1 < \dots < c_k\}$ with the same relations to $P \setminus A$; that is, for $a \in A$, $c \in C$, and $b \in P \setminus A$, we have
		\begin{align*}
			b <_R c \; &\text{if and only if} \; b <_P a, \; \text{and} \\
			b >_R c\; &\text{if and only if} \; b >_P a.
		\end{align*} 
		Note that $R$ may equally well be defined by replacing $A$ with $C$ in $Q$.
		
		We define $g(T) \in \inc^q(R)$ as follows. Define $g(T)(c_i) = v_i$, while for $b \in R \setminus C$, define $g(T)(b) = T(b) = f(T)(b)$.

		Recall the notions of flow paths and streambeds from Section~\ref{sec:proof}. We consider the actions of $K$-promotion on $T$ and $f(T)$. Clearly, if the label $1$ does not appear in $T$, then $f(\psi(T)) = \psi(f(T))$ and $g(\psi(T)) = \psi(g(T))$.
		Otherwise, note that the streambed of $T$ intersects $A$ if and only if the streambed of $f(T)$ intersects $A$ if and only if the streambed of $g(T)$ contains $C$. Moreover, the three flow paths all coincide on $P \setminus A$.
		
		Suppose the streambed of $T$ intersects $A$. Then the restriction of the flow path of $T$ to $A$ coincides with the flow path of the tableau $h(T)$ obtained by restricting $T$ to $A$ and then deflating it.
		Similarly, the restriction of the flow path of $f(T)$ to $A$ coincides with the flow path of the tableau obtained by restricting $f(T)$ to $A$ and then deflating it. Since these restricted deflated tableaux are exactly duals of each other, it follows that the restriction of the flow path of $f(T)$ to $A$ is the flow path for the inverse of $K$-promotion as applied to $h(T)$.
		
		For any $i$, we can now describe the effect of $\psi^i$ on $T$ and $f(T)$ in terms of $K$-promotion on $g(T)$ and $h(T)$. For any $b \in P \setminus A$, we have $\psi^i(T)(b) = \psi^i(f(T))(b) =  \psi^i(g(T))(b)$. Considering the $i$ streambeds from  applying $\psi$ to $g(T)$ $i$ times, let $j \leq i$ be the number of these streambeds that contain $C$. Suppose the labels of $C$ in $\psi^i(h(T))$ are $\{w_1 < w_2 < \dots < w_k \}$. Then using Proposition~\ref{prop:deflation}, for any $a \in A$, we have $\psi^i(T)(a) = w_\ell$ if and only if $\psi^j(h(T)) = \ell$ and we also have $\psi^i(f(T))(a) = w_{k+1-m}$ if and only if $\psi^{-j}(h(T)) = m$. In particular, the $\psi$-orbit of $T$ has the same cardinality as the $\psi$-orbit of $f(T)$. Since $T$ was arbitrary, $f$ then induces the desired cardinality-preserving bijection between the $K$-promotion orbits of $\inc^q(P)$ and $\inc^q(Q)$.
\end{proof}

For posets $P$ and $Q$, let $P \oplus Q$ denote the ordinal sum of $P$ and $Q$, where all elements of $P$ are declared to be less than all elements of $Q$. 
The following corollary is closely related to some speculations by Sam Hopkins and Mike Joseph in email correspondence (October 2020) with the author and others.

\begin{corollary}\label{cor:weird_propellers}
	If $P$ and $Q$ are finite graded posets with isomorphic comparability graphs and $c \in \mathbb{Z}^+$ is a positive integer, then there is a cardinality-preserving bijection between the rowmotion orbits of $P \times \bfc$ and those of $Q \times \bfc$, so that $P$ has NRP rowmotion if and only if $Q$ does.
	
	In particular, if $P$ is such that the ordinal sum $P \oplus {\bf 1}$ has NRP rowmotion, then so does ${\bf 1} \oplus P$.
\end{corollary}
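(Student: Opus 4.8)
The plan is to deduce the corollary by composing the two main tools already in hand: the equivariant bijection of Theorem~\ref{thm:equivbij} and the cardinality-preserving bijection of Theorem~\ref{thm:iso_graphs}. First I would record the one combinatorial observation that makes the height parameters line up: if $P$ and $Q$ have isomorphic comparability graphs, then $\rank(P) = \rank(Q)$. Indeed, a maximal chain of a poset is precisely a maximal clique of its comparability graph, so $\rank(P)+1$ equals the clique number of $G(P)$; since clique number is a graph-isomorphism invariant, the ranks agree.

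For the first statement, fix $c \in \mathbb{Z}^+$ and set $q = \rank(P)+c+1 = \rank(Q)+c+1$. Theorem~\ref{thm:equivbij} supplies an equivariant bijection between $J(P \times \bfc)$ under rowmotion and $\inc^q(P)$ under $K$-promotion, and likewise for $Q$; on the level of orbits each is cardinality-preserving. Theorem~\ref{thm:iso_graphs}, applied with this common height $q$, gives a cardinality-preserving bijection between the $K$-promotion orbits of $\inc^q(P)$ and those of $\inc^q(Q)$. Composing the three bijections yields the asserted cardinality-preserving bijection between the rowmotion orbits of $P \times \bfc$ and of $Q \times \bfc$. Since NRP rowmotion is the assertion that, for every $c$, no rowmotion orbit of the product has cardinality coprime to $\rank(\cdot)+c+1$, and since this target integer is the same for $P$ and $Q$, the bijection shows that $P$ has NRP rowmotion if and only if $Q$ does.

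For the ``in particular'' clause I would verify that $P \oplus {\bf 1}$ and ${\bf 1} \oplus P$ have isomorphic comparability graphs and are both graded, then invoke the first statement. Both comparability graphs arise from $G(P)$ by adjoining a single new vertex adjacent to every element of $P$: in $P \oplus {\bf 1}$ this new element is a global maximum and in ${\bf 1} \oplus P$ a global minimum, but comparability forgets the direction of these relations, so the map that is the identity on $P$ and interchanges the new top with the new bottom is a graph isomorphism. Moreover, if $P \oplus {\bf 1}$ has NRP rowmotion then it is in particular graded, whence $P$ is graded and hence so is ${\bf 1} \oplus P$. Applying the first statement to the pair $(P \oplus {\bf 1},\, {\bf 1} \oplus P)$ then transfers NRP rowmotion from $P \oplus {\bf 1}$ to ${\bf 1} \oplus P$.

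The argument is essentially assembly of stated results, so I do not expect a deep obstacle; the one point that genuinely requires care is confirming that the height $q$ feeding Theorem~\ref{thm:iso_graphs} is forced to coincide for $P$ and $Q$. This is exactly what the clique-number observation guarantees, and without it the two families of bijections could not be composed across the two posets.
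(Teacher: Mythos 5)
Your proof is correct and takes essentially the same route as the paper: the first claim is obtained by composing the equivariant bijections of Theorem~\ref{thm:equivbij} with the orbit bijection of Theorem~\ref{thm:iso_graphs}, and the ``in particular'' clause reduces to the first claim once one knows $P \oplus {\bf 1}$ and ${\bf 1} \oplus P$ have isomorphic comparability graphs (the paper certifies this via two dualizations of autonomous subsets, while you exhibit the isomorphism directly -- a cosmetic difference). Your clique-number argument that $\rank(P) = \rank(Q)$ is a genuinely needed detail that the paper's one-line proof leaves implicit, and it is a good instinct to have made it explicit.
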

\begin{proof}
The first statement is immediate from combining Theorems~\ref{thm:equivbij} and~\ref{thm:iso_graphs}. The second follows since $P \oplus {\bf 1}$ and ${\bf 1} \oplus P$ are related by a sequence of two dualizations of autonomous subsets.
\end{proof}

Corollary~\ref{cor:weird_propellers} (especially the ``in particular'' statement) gives some new examples of posets with NRP rowmotion by dualizing autonomous subsets of minuscule posets.

Let $P_{a,b}$ denote the poset ${\bf a} \oplus ({\bf 2} \times {\bf 2}) \oplus \bfb$. Note that $P_{a,a}$ is the propeller $J^a({\bf 2} \times {\bf 2})$, but otherwise $P_{a,b}$ is not a minuscule poset. Since propellers have NRP rowmotion, it follows from Corollary~\ref{cor:weird_propellers} that all posets $P_{a,b}$ with $a+b$ even have NRP rowmotion. Similarly, we may apply Corollary~\ref{cor:weird_propellers} to the other minuscule posets to obtain some additional posets with NRP rowmotion.

\begin{figure}[ht]
\begin{tikzpicture}
	\node (0) at (0,0) {$\bullet$};
\node [above of=0] (2) {$\bullet$};
\node [left of=2] (1) {$\bullet$};
\node [right of=2] (3) {$\bullet$};
\node [above of=2] (4) {$\bullet$};
\node [above of=4] (5) {$\bullet$};
\node [above of=5] (6) {$\bullet$};
\node [above of=6] (7) {$\bullet$};
\node [above of=7] (8) {$\bullet$};
\draw (0) to (1);
\draw (0) to (2);
\draw (0) to (3);
\draw (1) to (4);
\draw (2) to (4);
\draw (3) to (4);
\draw (4) to (5);
\draw (5) to (6);
\draw (6) to (7);
\draw (7) to (8);
\end{tikzpicture}
\caption{The poset $N$, one of the non-minuscule $9$-element posets with NRP rowmotion.}
\label{fig:posetN}
\end{figure}
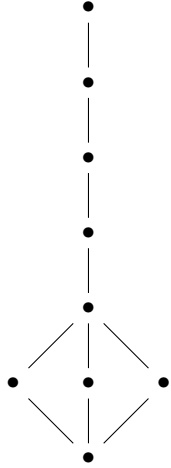

Using Corollary~\ref{cor:check_packed}, we systemically found all bounded graded posets on at most $9$ vertices that have NRP rowmotion. Every such poset with at most $8$ vertices either is minuscule or is one of the posets $P_{a.b}$ derived from a propeller via iterating the ``in particular'' statement of Corollary~\ref{cor:weird_propellers}. The $9$-vertex posets with NRP rowmotion are the minuscule posets ${\bf 3} \times {\bf 3}$ and ${\bf 9}$; the poset $N$ shown in Figure~\ref{fig:posetN} and the four other posets obtained from it by Corollary~\ref{cor:weird_propellers}; and the poset $W$ shown in Figure~\ref{fig:posetW}, together with its dual.

We note that the poset $W$ is related to the positive root poset $\Phi^+(B_3)$ via one application of the ``in particular'' statement of Corollary~\ref{cor:weird_propellers}. Rowmotion and $K$-promotion on these root posets has been studied in \cite{Dao.Wellman.Yost-Wolff.Zhang}.
Further discussion of root posets in relation to NRP rowmotion will appear in subsequent work with Marina Simmons.

The NRP rowmotion of the poset $N$ can be explained as follows. Observe that $N$ may be written as $N' \oplus {\bf 4}$, where $N'$ is a $5$-element graded poset. Consider the packed increasing tableaux on $N'$, which necessarily have alphabet size $3 = \rank(N')$, $4$, or $5=|N'|$. In $\inc^3(N')$, there is $1$ such tableau, forming a $1$-cycle under $\psi$. In $\inc^4(N')$, there are $6$ such tableaux, forming three $2$-cycles under $\psi$. In $\inc^5(N')$, there are again $6$ such tableaux, this time forming two $3$-cycles under $\psi$. Now consider the ordinal sum $N' \oplus \bfc$ for various $c \geq 0$. Clearly, the packed increasing tableaux of $N' \oplus \bfc$ have the same orbit structures as those of $N'$, but with the alphabet sizes increased by $c$. It follows from Corollary~\ref{cor:check_packed} that $N' \oplus \bfc$ will have NRP rowmotion whenever $c$ satisfies the system of congruences
\begin{align*}
	3+c &\equiv 0 \pmod 1, \\
	4+c &\equiv 0 \pmod 2,\\
	5+c &\equiv 0 \pmod 3.
\end{align*} By the Chinese Remainder Theorem, we may take $c=4$ to obtain the poset $N$. 

This construction of $N$ from $N'$ does not, however, extend to a general recipe for constructing posets with NRP rowmotion from arbitrary graded posets by taking the ordinal sum with a chain. Unfortunately, the analogous system of congruences one would have to solve generally has no solution.

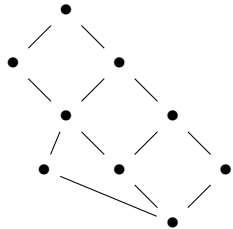
\begin{figure}[ht]
\begin{tikzpicture}
	\node (0) at (0,0) {$\bullet$};
\node [above left of=0] (1) {$\bullet$};
\node [above right of=0] (4) {$\bullet$};
\node [above left of=1] (3) {$\bullet$};
\node [left of=1] (2) {$\bullet$};
\node [above right of=1] (5) {$\bullet$};
\node [above right of=3] (6) {$\bullet$};
\node [above left of=3] (7) {$\bullet$};
\node [above right of=7] (8) {$\bullet$};
\draw (0) to (1);
\draw (0) to (2);
\draw (0) to (4);
\draw (1) to (3);
\draw (1) to (5);
\draw (2) to (3);
\draw (3) to (7);
\draw (3) to (6);
\draw (4) to (5);
\draw (5) to (6);
\draw (6) to (8);
\draw (7) to (8);
\end{tikzpicture}
\caption{The poset $W$, one of the non-minuscule $9$-element posets with NRP rowmotion. Note that relocating the minimum element of $W$ to its top would yield the root poset $\Phi^+(B_3)$.}
\label{fig:posetW}
\end{figure}

We now turn to some other families of posets, related to the minuscule posets, where one might naturally hope to find extensions of Theorem~\ref{thm:main}. We find, however, that these related families do not generally exhibit NRP rowmotion.

 Minuscule posets are examples of $d$-complete posets, posets associated to special dominant weights of Kac--Moody groups. See \cite{Proctor:Dynkin,Proctor:dcomplete,Proctor.Scoppetta} for background on $d$-complete posets.
Many results for minuscule posets extend to more general $d$-complete posets (e.g., \cite{Ilango.Pechenik.Zlatin,Kim.Yoo,Naruse.Okada,Proctor.Scoppetta}).
However, Theorem~\ref{thm:main} and Proposition~\ref{prop:maintab} do not extend to general $d$-complete posets. We call the poset $H$ whose Hasse diagram is illustrated in Figure~\ref{fig:Kpromotion} the \newword{bee hummingbird}. The bee hummingbird is a $d$-complete poset. In R.~Proctor's classification \cite{Proctor:Dynkin} of irreducible $d$-complete posets, the bee hummingbird is the smallest  exemplar of the family of $d$-complete posets known as \emph{birds}. It is easy to check that $\inc^6(H)$ contains a $\psi$-orbit consisting of exactly $5$ packed tableaux. Since $\gcd(5,6)=1$, it follows from Theorem~\ref{thm:equivbij} that the bee hummingbird does not have NRP rowmotion. Experimentation suggests that this is a general phenomenon for $d$-complete posets that are not minuscule. Indeed, we have not discovered any $d$-complete posets with NRP rowmotion besides minuscule posets.


What is different about general $d$-complete posets that explains why they fail to have NRP rowmotion? The issue may be that $d$-complete posets generally don't have unique maximum elements and don't have an analogue of the $\PD$ anti-involution (Kac--Moody Weyl groups are generally infinite and don't have a longest element).

As another source of posets with NRP rowmotion, one might hope that $P \times \bfc$ would have NRP rowmotion whenever $P$ does. However, this is entirely false. Indeed, although all rectangles have NRP rowmotion, the triple product of chains ${\bf 2} \times {\bf 2} \times {\bf 2}$ does not. By Theorem~\ref{thm:equivbij}, this failure is immediate from the packed tableau $T \in \inc^7({\bf 2} \times {\bf 2} \times {\bf 2})$ illustrated in Figure~\ref{fig:cube}, whose $\psi$-orbit has cardinality $27$.
 
 \begin{figure}[h]
 \begin{tikzpicture}[line cap=round,line join=round,>=triangle 45,x=4.0cm,y=4.0cm]
\clip(0,0.1) rectangle (1.2,1.0);
\draw [line width=1.pt] (0.6,0.2)-- (0.4,0.4);
\draw [line width=1.pt] (0.6,0.2)-- (0.6,0.4);
\draw [line width=1.pt] (0.6,0.2)-- (0.8,0.4);
\draw [line width=1.pt] (0.4,0.6)-- (0.6,0.8);
\draw [line width=1.pt] (0.6,0.8)-- (0.6,0.6);
\draw [line width=1.pt] (0.6,0.8)-- (0.8,0.6);
\draw [line width=1.pt] (0.4,0.6)-- (0.6,0.4);
\draw [line width=1.pt] (0.6,0.4)-- (0.8,0.6);
\draw [line width=1.pt] (0.6,0.6)-- (0.4,0.4);
\draw [line width=1.pt] (0.6,0.6)-- (0.8,0.4);
\draw [line width=1.pt] (0.4,0.6)-- (0.4,0.4);
\draw [line width=1.pt] (0.8,0.6)-- (0.8,0.4);
\draw [fill=ffqqqq] (0.6,0.2) circle (2.5pt) node[anchor=west] {$1$};
\draw [fill=ffqqqq] (0.4,0.4) circle (2.5pt) node[anchor=east] {$3$};
\draw [fill=ffqqqq] (0.6,0.4) circle (2.5pt) node[anchor=west] {$4$};
\draw [fill=ffqqqq] (0.8,0.4) circle (2.5pt) node[anchor=west] {$2$};
\draw [fill=ffqqqq] (0.4,0.6) circle (2.5pt) node[anchor=east] {$6$};
\draw [fill=ffqqqq] (0.6,0.6) circle (2.5pt) node[anchor=west] {$5$};
\draw [fill=ffqqqq] (0.8,0.6) circle (2.5pt) node[anchor=west] {$5$};
\draw [fill=ffqqqq] (0.6,0.8) circle (2.5pt) node[anchor=west] {$7$};
\end{tikzpicture}
 \caption{An increasing tableau $T \in \inc^7({\bf 2} \times {\bf 2} \times {\bf 2})$ whose $\psi$-orbit has cardinality $27$. Note that $\gcd(7,27)=1$, so $T$ witnesses that ${\bf 2} \times {\bf 2} \times {\bf 2}$ does not have NRP rowmotion.}	
 \label{fig:cube}
 \end{figure}
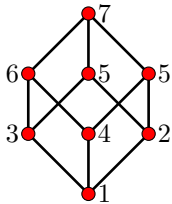
 
 Similar phenomena appear for other triple products of chain posets.  

\section*{Acknowledgements}
The author acknowledges support from NSERC Discovery Grant RGPIN-2021-02391 and Launch Supplement DGECR-2021-00010.

The author is grateful for useful conversations with Karen Collins, Sam Hopkins, Mike Joseph, Anne Schilling, Marina Simmons, Jessica Striker, and Corey Vorland. The author is also grateful to Sylvie Corteel for teaching him to use the word ``packed.'' Thanks also to Mike Joseph, Becky Patrias, and Jessica Striker for careful reading and very helpful comments on an earlier draft of this paper.

\bibliographystyle{amsalpha} 
\bibliography{CFDF}

\end{document}